\newtheorem{lem}{Lemma}[section]
\newtheorem{theorem}[lem]{Theorem}
\newtheorem{proposition}[lem]{Proposition}
\newtheorem{corollary}[lem]{Corollary}
\newtheorem{problem}[lem]{Problem}
\theoremstyle{definition}
\newcommand{\F}{\mathcal {F}}
\newcommand{\Ra}{\Rightarrow}
\newcommand{\U}{\mathcal U}
\newcommand{\V}{\mathcal V}
\newcommand{\A}{\mathcal A}
\newcommand{\LL}{\mathcal L}
\newcommand{\Fil}{\varphi}
\newcommand{\w}{\omega}
\newcommand{\upspace}{\upsilon}
\newcommand{\la}{\langle}
\newcommand{\ra}{\rangle}
\newcommand{\uupspace}{\upsilon^\bullet}
\title{Algebra in the superextensions of semilattices}
\author{Taras Banakh and Volodymyr Gavrylkiv}
\address{Ivan Franko National University of Lviv, Ukraine and\newline
Uniwersytet Humanistyczno-Przyrodniczy Jana Kochanowskiego, Kielce, Poland}
\email{t.o.banakh@gmail.com}
\address{Vasyl Stefanyk Precarpathian National University,
Ivano-Frankivsk, Ukraine}
\email{vgavrylkiv@yahoo.com}
\begin{document}
\begin{abstract} Given a semilattice $X$ we study the algebraic properties of the semigroup $\upspace(X)$ of upfamilies on $X$. The semigroup $\upspace(X)$ contains the Stone-\v Cech extension $\beta(X)$, the superextension $\lambda(X)$, and the space of filters $\varphi(X)$ on $X$ as closed subsemigroups. We prove that $\upspace(X)$ is a semilattice iff $\lambda(X)$ is a semilattice iff $\varphi(X)$ is a semilattice iff the semilattice $X$ is  finite and linearly ordered. We prove that the semigroup $\beta(X)$ is a band if and only if $X$ has no infinite antichains, and the semigroup $\lambda(X)$ is commutative if and only if $X$ is a bush with finite branches.
\end{abstract}
\subjclass{06A12, 20M10}
\keywords{semilattice, band, commutative semigroup,  the space of upfamilies, the space of filters, the space of maximal linked systems, superextension}
\maketitle

\section*{Introduction}

One of powerful tools in the modern Combinatorics of Numbers is
the method of ultrafilters based on the fact that each
(associative) binary operation $*:X\times X\to X$ defined on a
discrete topological space $X$ extends to a
right-topological (associative) operation $*:\beta (X)\times \beta
(X)\to\beta (X)$ on the Stone-\v Cech compactification $\beta (X)$
of $X$, see \cite{HS}, \cite{P}.
 The Stone-\v Cech extension $\beta (X)$ is the space
of ultrafilters on $X$. The extension of the operation from $X$ to
$\beta(X)$ can be defined by the simple formula:
\begin{equation}\label{extension}\U\ast\V=\big\la\bigcup_{x\in U}x{*}V_x:U\in\U,\;\;(V_x)_{x\in U}\in \V^U\big\ra,
\end{equation}
where $\la \mathcal B\ra=\{A\subset X:\exists B\in\mathcal B\;\;B\subset A\}$ is the upper closure of a family $\mathcal B$. In this case $\mathcal B$ is called a {\em base} of $\la\mathcal B\ra$.

Endowed with the so-extended operation, the Stone-\v Cech
compactification $\beta(X)$ becomes a compact right-topological
semigroup. The algebraic properties of this semigroup (for
example, the existence of idempotents or minimal left ideals) have
important consequences in combinatorics of numbers, see \cite{HS},
\cite{P}.

In \cite{G2} it was observed that the binary operation $*$ extends
not only to $\beta (X)$ but also  to the space $\upsilon(X)$ of all
upfamilies on $X$. By definition, a family $\F$ of
non-empty subsets of a discrete space $X$ is called an {\em
upfamily} if for any sets $A\subset B\subset X$ the inclusion $A\in\F$ implies $B\in\F$. The space $\upspace(X)$ is a closed subspace of the double power-set $\mathcal P(\mathcal P(X))$ endowed with the compact Hausdorff topology of the Tychonoff power $\{0,1\}^{\mathcal P(X)}$. In the papers \cite{G1}, \cite{G2}, \cite{BGN}--\cite{BG4} the space $\upsilon(X)$ was denoted by $G(X)$ and its elements were called inclusion hyperspaces\footnote{We decided to change the terminology and notation after discovering the paper \cite[2.7.4]{SS} that discusses monadic properties of the up-set functor $\upspace$.}. The extension of a binary
operation $\ast$ from $X$ to $\upsilon(X)$ can be defined in the same way
as for ultrafilters, i.e., by the formula~(\ref{extension})
applied to any two upfamilies $\U,\V\in \upsilon(X)$. If $X$
is a semigroup, then $\upsilon(X)$ is a compact Hausdorff
right-topological semigroup containing $\beta (X)$ as closed
subsemigroups. The algebraic properties of this semigroups were
studied in details in \cite{G2}.


The space $\upspace(X)$ of upfamilies over a discrete space $X$ contains many
interesting subspaces. First we recall some definitions. An upfamily $\A\in \upspace(X)$ is defined to be
\begin{itemize}
\item {\em a filter} if
$A_1\cap A_2\in\A$ for all sets $A_1,A_2\in\A$;
\item {\em an ultrafilter} if $\A=\A'$ for any filter $\A'\in \upspace(X)$ containing $\A$;
\item {\em linked} if $A\cap B\ne\emptyset$ for any sets $A,B\in\A$;
\item {\em maximal linked} if $\A=\A'$ for any linked
upfamily $\A'\in \upspace(X)$ containing $\A$.
\end{itemize}

By $\Fil(X)$, $\beta(X)$, $N_2(X)$, and $\lambda(X)$  we denote the subspaces of $\upspace(X)$ consisting of filter, ultrafilters, linked upfamilies, and maximal linked upfamilies, respectively. The space $\lambda(X)$ is
called {\em the superextension} of $X$, see \cite{vM}, \cite{Ve}. In \cite{G2} it was
observed that for a discrete semigroup $X$ the subspaces $\Fil(X)$, $\beta(X)$, $N_2(X)$, $\lambda(X)$ are closed subsemigroups of
the semigroup $\upspace(X)$.
The following diagram describes the inclusion relations between
these subspaces of $\upspace(X)$ (an arrow $A\to B$ indicates that $A$ is a subset of $B$).
$$
\xymatrix{
\beta(X)\ar[d]\ar[r]&\lambda(X)\ar[d]\\
\varphi(X)\ar[r]&N_{2}(X)\ar[r]&\upspace(X)
}$$


In \cite{G2}, \cite{BGN} --- \cite{BG4} we studied the properties of the compact right-topological semigroup $\upsilon(X)$ and its subsemigroups for groups $X$. In this paper we shall study the algebraic structure of the semigroups $\lambda(X)$, $\varphi(X)$, $N_2(X)$, and $\upsilon(X)$ for semilattices $X$.

Let us recall that a {\em semilattice} is a commutative idempotent semigroup.
Idempotent semigroups are called {\em bands}. So, in a band each element $x$ is an {\em idempotent}, which means that $xx=x$.
A semigroup $S$ is {\em linear} if $xy\in\{x,y\}$ for any elements $x,y\in X$. It follows that each linear semigroup $S$ is a band. Each (linear) semilattice is  partially (linearly) ordered by the relation $\le$ defined by $x\le y$ iff $xy=x$.

A semigroup $S$ is {\em cancellative} if for each element $a\in S$ the left shift $l_a:S\to S$, $l_a:x\mapsto ax$, and the right shift $r_a:S\to S$, $r_a:x\mapsto xa$, are injective.
A semigroup $S$ is called {\em Clifford} (resp. {\em sub-Clifford}) if $S$ is a union of groups (resp. of cancellative semigroups). Observe that a subsemigroup of a sub-Clifford semigroup is sub-Clifford and a finite semigroup $S$ is Clifford if and only if it is sub-Clifford. It is easy to see that a semigroup $S$ is sub-Clifford if and only if for every natural numbers $n,m$ it is {\em $(n,m)$-Clifford} in the sense that for any element $x\in S$ the equality $x^{n+1}=x^{m+1}$ implies $x^n=x^m$.

A semigroup $S$ is called {\em a regular semigroup} if $a\in aSa$ for any $a\in S$. Such a semigroup
$S$ is called {\em an inverse semigroup} if $ab=ba$ for any
idempotents $a,b\in S$. Observe that each band is a Clifford
semigroup and every Clifford semigroup is sub-Clifford and regular. An inverse semigroup with a unique idempotent is a group.

These algebraic properties relate as follows:
$$\xymatrix{
&&\mbox{sub-Clifford semigroup}\ar[r]&\mbox{(1,2)-Clifford semigroup}\\
\mbox{semilattice}\ar[r]\ar[d]&\mbox{band}\ar[r]&\mbox{Clifford semigroup}\ar[u]\ar[r]&\mbox{regular semigroup}\\
\mbox{commutative inverse semigroup}\ar[rr]&&\mbox{Clifford inverse semigroup}\ar[r]\ar[u]&\mbox{inverse semigroup}\ar[u]\\
\mbox{commutative group}\ar[rr]\ar[u]&&\mbox{group}\ar[u]
}
$$

In this paper we shall characterize semigroups $X$ whose extensions $\upspace(X)$, $\lambda(X)$, $\varphi(X)$ or $N_2(X)$ are bands, linear semigroups, commutative semigroups, or semilattices. In Section~\ref{s:lattice} we shall characterize lattices $X$ whose extensions $\upspace(X)$, $\lambda(X)$, $\varphi(X)$ are lattices. The results obtained in this paper will be applied in the paper \cite{BG5} devoted to the superextensions of inverse semigroups. 

\section{Semigroups whose extensions are bands}

In this section we shall characterize semigroups $X$ whose extensions $\upspace(X)$, $\lambda(X)$ or $\varphi(X)$ are bands.
Let us recall that a semigroup $S$ is a (linear) band if $xx=x$ for all $x\in X$ (and $xy\in\{x,y\}$ for all $x,y\in X$).

Let us recall that an element $a$ of a semigroup $S$ is {\em regular} in $S$ if $a\in aSa$. It is clear that each idempotent is a regular element.

\begin{theorem}\label{t1.1} For a semigroup $X$ the following conditions are equivalent:
\begin{enumerate}
\item $X$ is linear;
\item $\upspace(X)$ is a band;
\item $\varphi(X)$ is a band;
\item $\lambda(X)$ is a band.
\end{enumerate}
\end{theorem}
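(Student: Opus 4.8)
The plan is to prove the cycle of implications $(1)\Rightarrow(2)\Rightarrow(3)\Rightarrow(4)\Rightarrow(1)$, exploiting the inclusion diagram $\beta(X)\subset\varphi(X)\subset\upspace(X)$ and $\beta(X)\subset\lambda(X)$. Let me sketch how I would approach each arrow.

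For $(1)\Rightarrow(2)$, assuming $X$ is linear (so $xy\in\{x,y\}$ for all $x,y$, and in particular $X$ is a band), I would show directly that every $\U\in\upspace(X)$ satisfies $\U\ast\U=\U$ using the defining formula~(\ref{extension}). One inclusion, $\U\subset\U\ast\U$, should hold for any band: taking $U\in\U$ and the constant choice $V_x=U$ gives $\bigcup_{x\in U}x{*}U\supset\{x{*}x:x\in U\}$, and since $x\ast x=x$ we recover $U\in\U\ast\U$. The reverse inclusion $\U\ast\U\subset\U$ is where linearity enters: given $U\in\U$ and a choice $(V_x)_{x\in U}\in\U^U$, linearity forces each $x\ast v_x\in\{x,v_x\}$, so the product set $\bigcup_{x\in U}x{*}V_x$ is contained in $U\cup\bigcup_{x\in U}V_x$; the task is to verify this basic set lies in the upfamily $\U$. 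I expect this to follow because the product set meets the structure of $\U$ from both factors being $\U$ itself.

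The implications $(2)\Rightarrow(3)$ and $(2)\Rightarrow(4)$ are essentially immediate from the fact that $\varphi(X)$ and $\lambda(X)$ are \emph{closed subsemigroups} of $\upspace(X)$: if every element of the larger semigroup is idempotent, so is every element of a subsemigroup. (Strictly, since the equivalences are stated as a single cycle, I would instead route through $(2)\Rightarrow(3)$ as a subsemigroup restriction and then handle $(3)\Rightarrow(4)$ and $(4)\Rightarrow(1)$, but the subsemigroup observation makes the downward steps routine.) The genuinely substantive converse is $(4)\Rightarrow(1)$: from $\lambda(X)$ being a band I must recover linearity of $X$. Since $\beta(X)\subset\lambda(X)$, in particular $\beta(X)$ is a band, and principal ultrafilters embed $X$ into $\beta(X)$ as a subsemigroup, so $X$ itself is already a band. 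It remains to upgrade "band" to "linear", i.e. to rule out the existence of two elements $x,y$ with $xy\notin\{x,y\}$.

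The hard part will be this final step, $(4)\Rightarrow(1)$. The strategy I would use is contrapositive: suppose $X$ is a band that is not linear, so there exist $x,y\in X$ with $z:=xy\notin\{x,y\}$; I then want to build a maximal linked system $\A\in\lambda(X)$ with $\A\ast\A\ne\A$, contradicting $(4)$. The natural candidate is a two-element linked family generated by $\{x\}$ and $\{y\}$, extended to a maximal linked system, chosen so that computing $\A\ast\A$ via~(\ref{extension}) produces the element $z=xy$ in a way not already reflected in $\A$. Concretely, I would pick $\A$ containing the two sets whose pairwise product under $\ast$ forces $z$ into a set of $\A\ast\A$ that does not belong to $\A$. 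The delicate points are (a) ensuring the chosen family is genuinely linked and extends to a maximal linked system, and (b) checking that the off-diagonal product $x\ast y=z$ survives into $\A\ast\A$ but witnesses a set absent from $\A$. I expect to need the band identities $xx=x$, $yy=y$ together with $xy=z\ne x,y$ to separate the "diagonal" contributions (which reproduce $\A$) from the genuinely new "cross" term, and this separation is the crux of the whole theorem.
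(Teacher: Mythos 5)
Your skeleton matches the paper's in outline, but three of its load-bearing steps have genuine gaps. The most local one is in $(1)\Ra(2)$: your reverse inclusion argument points the wrong way. To conclude that a basic set $B=\bigcup_{x\in U}x{*}V_x$ belongs to the upfamily $\U$, you must exhibit a member of $\U$ \emph{contained in} $B$ (upfamilies are closed under passing to supersets), whereas you only observe that $B$ is \emph{contained in} $U\cup\bigcup_{x\in U}V_x$; that containment gives nothing. The missing idea is a dichotomy: either some $x\in U$ satisfies $xv=v$ for all $v\in V_x$, in which case $V_x=x{*}V_x\subset B$ and hence $B\in\U$; or else for every $x\in U$ there is $v\in V_x$ with $xv\ne v$, hence $xv=x$ by linearity, in which case $U\subset B$ and again $B\in\U$. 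This two-case argument is exactly how the paper closes this step; ``I expect this to follow'' does not substitute for it.

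The other two gaps are more structural. First, your cycle $(2)\Ra(3)\Ra(4)\Ra(1)$ collapses at $(3)\Ra(4)$: $\lambda(X)$ is \emph{not} a subsemigroup of $\varphi(X)$ (the two meet only in $\beta(X)$), so no ``subsemigroup observation'' makes that step routine; under your plan condition (3) never implies anything, and its equivalence with the others is simply not established. The paper instead proves $(3)\Ra(1)$ directly by a regularity trick: since $\varphi(X)$ is a band, the filter $\F=\la\{x,y\}\ra$ is idempotent, hence regular in $\upspace(X)$, so $\F*\A*\F=\F$ for some $\A\in\upspace(X)$; unwinding the product yields elements $a_x$ with $xa_xy\in\{x,y\}$, and the band identities then force $xy\in\{x,y\}$ (if $xa_xy=x$ then $xy=xa_xyy=x$; if $xa_xy=y$ then $xy=xxa_xy=y$). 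Second, your candidate witness for $(4)\Ra(1)$ is not viable: the family generated by the disjoint singletons $\{x\}$ and $\{y\}$ is not linked, so it extends to no maximal linked system containing both, and your construction cannot even start. The correct witness is $\LL=\la\{x,y\},\{x,xy\},\{y,xy\}\ra$, the sets containing at least two of the three points $x,y,xy$; the paper shows this $\LL$ is not even \emph{regular} in $\upspace(X)$: any putative $\A$ with $\LL*\A*\LL=\LL$ produces a product $uav\in\{x,y\}$ with $u\in\{x,xy\}$, $v\in\{y,xy\}$, which rewrites as $xby\in\{x,y\}$ for some $b$, and the same band computation as above forces $xy\in\{x,y\}$, a contradiction. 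Note that your heuristic of separating ``diagonal'' from ``cross'' terms in $\A*\A$ would be painful for a non-commutative band (words like $yxy$, $xyax y$ appear), which is precisely why the paper routes through non-regularity rather than computing the square of a maximal linked system directly.
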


\begin{proof} $(1)\Ra(2)$ Assume that the semigroup $X$ is linear. To show that $\upspace(X)$ is a band, we should check that $\A*\A=\A$ for any upfamily  $\A\in\upsilon(X)$. Since $X$ is linear, for any $A\in\A$ we get $A=A*A\in\A*\A$ and hence $\A\subset\A*\A$.

To show that $\A\supset\A*\A$, fix any basic subset $B=\bigcup\limits_{x\in A}x{*}A_x\in\A*\A$ where $A\in\A$ and $A_x\in\A$ for all $x\in A$.

Now we consider two cases.

(i) There is $x\in A$ such that $xa=a$ for all $a\in A_x$. In this case $\A\ni A_x=x{*}A_x\subset B$ and thus $B\in\A$.

(ii) For every $x\in A$ there is $a\in A_x$ such that $xa\ne a$ and hence $xa=x$ (as $X$ is linear). In this case $\A\ni A\subset\bigcup_{x\in A}x*A_x=B$ and hence $B\in\A$.
\smallskip

The implications $(2)\Ra(3,4)$ are trivial.
\smallskip

$(3)\Ra(1)$ Assume that $\varphi(X)$ is a band. Then $X$, being a subsemigroup of $\varphi(X)$, also is a band. To show that $X$ is linear, take any two points $x,y\in X$ and consider the filter $\F=\la\{x,y\}\ra\in\varphi(X)$. Being an idempotent, the filter $\F$ is regular in $\upspace(X)$. Consequently, we can find an upfamily $\A\in\upsilon(X)$ such that $\F*\A*\F=\F$. It follows that there are sets $A_x,A_y\in\A$ such that $(xA_x\cup yA_y)\cdot\{x,y\}\subset\{x,y\}$. In particular, for every $a_x\in A_x$ we get $xa_xy\in\{x,y\}$. If $xa_xy=x$, then $xy=xa_xyy=xa_xy=x$. If $xa_xy=y$, then $xy=xxa_xy=xa_xy=y$, witnessing that the band $X$ is linear.

$(4)\Ra(1)$ Assume that $\lambda(X)$ is a band. Then $X$, being a subsemigroup of $\lambda(X)$, is a band as well. Assuming that the band $X$ is not linear, we can find two points $x,y\in X$ such that $xy\notin\{x,y\}$.
We claim that the maximal linked system $\LL=\la\{x,y\},\{x,xy\},\{y,xy\}\ra\in\lambda(X)$ is not an idempotent. We shall prove more: the element $\LL$ is not regular in the semigroup $\upspace(X)$.
Assuming the converse, we can find an upfamily $\A\in\upspace(X)$ such that $\LL*\A*\LL=\LL$. It follows from $\{x,y\}\in\LL=\LL*\A*\LL$ that $\{x,y\}\supset \bigcup_{u\in L}u*B_u$ for some set $L\in\LL$ and some sets $B_u\in\A*\LL$, $u\in L$. The linked property of family $\LL$ implies that the intersection $L\cap\{x,xy\}$ contains some point $u$. Now for the set $B_u\in\A*\LL$ find a set $A\in\A$ and a family $(L_a)_{a\in A}\in\LL^A$ such that $B_u\supset\bigcup_{a\in A}a*L_a$. Fix any point $a\in A$ and a point $v\in L_a\cap\{y,xy\}$. Then $uav\in uaL_a\subset uB_u\subset \{x,y\}$.
Since $u\in\{x,xy\}$ and $v\in \{y,xy\}$, the element $uav$ is equal to $xby$ for some element $b\in\{a,ya,ax,yax\}$. So, $xby\in\{x,y\}$. If $xby=x$, then $xy=xbyy=xby=x\in\{x,y\}$. If $xby=y$, then $xy=xxby=xby=y\in\{x,y\}$. In both cases we obtain a contradiction with the choice of the points $x,y\notin\{x,y\}$.
\end{proof}

 Observe that the proof of Theorem~\ref{t1.1} yields a bit more, namely:

\begin{proposition} For a band $X$ the following conditions are equivalent:
\begin{enumerate}
\item $X$ is linear;
\item each element of $\varphi(X)$ is regular in $\upspace(X)$;
\item each element of $\lambda(X)$ is regular in $\upspace(X)$.
\end{enumerate}
\end{proposition}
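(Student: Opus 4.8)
The plan is to read off the proposition directly from the proof of Theorem~\ref{t1.1}, since the claim is essentially the assertion that certain implications in that proof go through under the weaker hypothesis ``each idempotent of $\varphi(X)$ (resp.\ $\lambda(X)$) is regular in $\upspace(X)$'' in place of ``$\varphi(X)$ (resp.\ $\lambda(X)$) is a band''. First I would fix a band $X$ and establish $(1)\Ra(2)$ and $(1)\Ra(3)$ together: if $X$ is linear, then by the implication $(1)\Ra(2)$ of Theorem~\ref{t1.1} the whole semigroup $\upspace(X)$ is a band, so every element of $\upspace(X)$ is idempotent and hence regular. In particular every element of $\varphi(X)\subset\upspace(X)$ and of $\lambda(X)\subset\upspace(X)$ is regular in $\upspace(X)$, giving both $(2)$ and $(3)$ at once.

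For the reverse directions I would revisit the two arguments $(3)\Ra(1)$ and $(4)\Ra(1)$ of Theorem~\ref{t1.1} and observe that each of them used the band hypothesis only through one single idempotent. For $(2)\Ra(1)$ of the proposition I would take arbitrary $x,y\in X$ and form the filter $\F=\la\{x,y\}\ra\in\varphi(X)$; since $X$ is already assumed to be a band, this $\F$ is an idempotent of $\varphi(X)$, and the hypothesis $(2)$ says precisely that $\F$ is regular in $\upspace(X)$. From regularity we extract an upfamily $\A$ with $\F*\A*\F=\F$, and the computation in Theorem~\ref{t1.1} $(3)\Ra(1)$ then yields $xy\in\{x,y\}$, proving $X$ linear. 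For $(3)\Ra(1)$ of the proposition I would argue contrapositively exactly as in Theorem~\ref{t1.1} $(4)\Ra(1)$: if the band $X$ is not linear, pick $x,y$ with $xy\notin\{x,y\}$ and form the maximal linked system $\LL=\la\{x,y\},\{x,xy\},\{y,xy\}\ra\in\lambda(X)$; the argument there shows $\LL$ is \emph{not} regular in $\upspace(X)$, contradicting $(3)$. Thus $(3)$ forces $X$ to be linear.

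The point I would emphasize is that no genuinely new work is required: the proof of Theorem~\ref{t1.1} already proves the element-wise regularity statements, because in $(3)\Ra(1)$ it only invoked regularity of the particular idempotent $\F=\la\{x,y\}\ra$, and in $(4)\Ra(1)$ it explicitly established non-regularity of the particular element $\LL$ rather than merely non-idempotence. The only mild obstacle is bookkeeping the logical direction of the two quantifiers: the forward implication $(1)\Ra(2,3)$ needs regularity of \emph{every} element (which is free, since linearity makes all of $\upspace(X)$ a band), while the reverse implications only need to test regularity against one carefully chosen idempotent or maximal linked system. I would therefore present the proof as a short remark that cites the relevant paragraphs of the proof of Theorem~\ref{t1.1} verbatim, noting which single element is being used in each reverse implication and that the standing assumption ``$X$ is a band'' guarantees the chosen $\F$ is indeed an idempotent of $\varphi(X)$.
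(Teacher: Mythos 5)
Your overall route is exactly the paper's: the proposition appears there with no separate argument, precisely as an observation read off from the proof of Theorem~\ref{t1.1} --- $(1)\Ra(2,3)$ because linearity makes all of $\upspace(X)$ a band (idempotents are regular), $(2)\Ra(1)$ via the computation with the filter $\F=\la\{x,y\}\ra$, and $(3)\Ra(1)$ via the proof that $\LL=\la\{x,y\},\{x,xy\},\{y,xy\}\ra$ is not regular in $\upspace(X)$.

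However, one claim you make (and repeat in your closing paragraph) is false and must be deleted: for a band $X$ that is not linear, the filter $\F=\la\{x,y\}\ra$ is \emph{not} an idempotent of $\varphi(X)$. Indeed $\F*\F=\la\{x,xy,yx,y\}\ra$, which equals $\F$ only when $xy,yx\in\{x,y\}$ --- precisely the linearity you are trying to prove. The error comes from your opening reformulation of conditions (2) and (3) as ``each \emph{idempotent} of $\varphi(X)$ (resp.\ $\lambda(X)$) is regular in $\upspace(X)$''; the proposition actually says each \emph{element}. Under your idempotent-only reading the argument would genuinely break: you could not apply the hypothesis to $\F$ (it need not be idempotent), and in the $\lambda$ case the test element $\LL$ is certainly not an idempotent, so the hypothesis would be vacuous for it as well. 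Under the correct reading no idempotency is needed at all: $\F\in\varphi(X)$ and $\LL\in\lambda(X)$, so hypotheses (2) and (3) apply to them directly. With that one-line repair your proof is complete and coincides with the paper's.
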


The linearity of a semilattice $X$ can be also characterized via the $(1,2)$-Clifford property of the semigroups $\varphi(X)$ and $\lambda(X)$.

\begin{theorem}\label{t(1,2)} For a semilattice $X$ the following conditions are equivalent:
\begin{enumerate}
\item $X$ is linear;
\item $\varphi(X)$ is $(1,2)$-Clifford;
\item $\lambda(X)$ is $(1,2)$-Clifford.
\end{enumerate}
\end{theorem}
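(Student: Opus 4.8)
The plan is to treat $(1)\Ra(2)$ and $(1)\Ra(3)$ as immediate and to prove the two substantial implications $(2)\Ra(1)$ and $(3)\Ra(1)$ by contraposition. For the easy direction, if $X$ is linear then Theorem~\ref{t1.1} tells us that both $\varphi(X)$ and $\lambda(X)$ are bands; and every band is $(1,2)$-Clifford, since in a band the identity $s=s^2$ holds for every element $s$, so the required implication $s^2=s^3\Ra s=s^2$ is automatic (alternatively, a band is Clifford, hence sub-Clifford, hence $(n,m)$-Clifford for all $n,m$). Thus $(1)$ yields $(2)$ and $(3)$.

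For the converses I would assume that the semilattice $X$ is \emph{not} linear and produce, in each of $\varphi(X)$ and $\lambda(X)$, a single witness whose square and cube agree while differing from itself. Non-linearity gives two incomparable elements $x,y$; setting $z:=xy$ we have $z\notin\{x,y\}$, and a short computation ($xz=x(xy)=xy=z$, $yz=z$, $zx=zy=z$) shows $z<x$, $z<y$ and that $x,y,z$ are three distinct points forming a sub-semilattice with bottom $z$.

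In the filter case I would take the principal filter $\F=\la\{x,y\}\ra\in\varphi(X)$. The decisive simplification is that principal filters multiply by the rule $\la A\ra*\la B\ra=\la A{\cdot}B\ra$, where $A{\cdot}B=\{ab:a\in A,\ b\in B\}$; this is read off directly from~(\ref{extension}), as $A{\cdot}B=\bigcup_{u\in A}uB$ is the smallest basic set. Computing inside $\{x,y,z\}$ gives $\{x,y\}{\cdot}\{x,y\}=\{x,y,z\}$ and $\{x,y,z\}{\cdot}\{x,y\}=\{x,y,z\}$, so that $\F*\F=\la\{x,y,z\}\ra=\F*\F*\F$, while $\F\ne\F*\F$ because $z\notin\{x,y\}$. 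Hence $\varphi(X)$ is not $(1,2)$-Clifford.

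For the superextension I would use the maximal linked system $\LL=\la\{x,y\},\{x,z\},\{y,z\}\ra\in\lambda(X)$ from the proof of Theorem~\ref{t1.1}. Here the main obstacle is that $\LL$ is not principal, so $\LL*\LL$ cannot be computed by the rule above; the key trick is that a single clever choice in~(\ref{extension}) drops a singleton into the product. Taking $U=\{x,y\}\in\LL$ with $L_x=\{y,z\}$ and $L_y=\{x,z\}$ gives $x{\cdot}\{y,z\}\cup y{\cdot}\{x,z\}=\{z\}\cup\{z\}=\{z\}$, so $\{z\}\in\LL*\LL$. Since $\lambda(X)$ is a subsemigroup, $\LL*\LL$ is again maximal linked, hence linked, so every one of its members meets $\{z\}$; this forces $\LL*\LL=\la\{z\}\ra$, the principal ultrafilter at $z$. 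The same device handles the third power: choosing $U=\{z\}$ and $L_z=\{y,z\}$ yields $z{\cdot}\{y,z\}=\{z\}$, so $\{z\}\in\la\{z\}\ra*\LL$ and maximal linkedness again collapses this to $\la\{z\}\ra*\LL=\la\{z\}\ra$. Therefore $\LL*\LL*\LL=\la\{z\}\ra=\LL*\LL$, whereas $\LL\ne\la\{z\}\ra$ since $\{x,y\}\in\LL$ but $z\notin\{x,y\}$; so $\lambda(X)$ is not $(1,2)$-Clifford. This establishes $(2)\Ra(1)$ and $(3)\Ra(1)$, completing the equivalence.
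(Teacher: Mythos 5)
Your proof is correct and follows essentially the same route as the paper: the easy direction via Theorem~\ref{t1.1} plus the fact that bands are $(1,2)$-Clifford, and for the converses the very same witnesses $\F=\la\{x,y\}\ra$ and $\LL=\la\{x,y\},\{x,xy\},\{y,xy\}\ra$ with the same computations $\F^2=\F^3\ne\F$ and $\LL^2=\LL^3=\la\{xy\}\ra\ne\LL$. The only difference is that you spell out the verifications (the principal-filter multiplication rule and the linkedness argument forcing $\LL*\LL=\la\{xy\}\ra$) which the paper leaves as ``observe that.''
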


\begin{proof} The implications $(1)\Ra(2,3)$ follow from Theorem~\ref{t1.1} because each band is a $(1,2)$-Clifford semigroup.
\smallskip

$(2,3)\Ra(1)$ Assume that the semilattice $X$ is not linear. Then $X$ contains two elements $x,y\in X$ such that $yx=xy\notin\{x,y\}$.

Consider the filter $\F=\la\{x,y\}\ra$ and observe that $\F\ne\F\cdot\F=\la\{x,xy,y\}\ra=\F\cdot\F\cdot\F$, which means that the semigroup $\varphi(X)$ is not $(1,2)$-Clifford.

To see that $\lambda(X)$ is not $(1,2)$-Clifford, consider the maximal linked system
$\LL=\la\{x,y\},\{x,xy\},\{y,xy\}\ra\in\lambda(X)$ and observe that $\LL\ne\LL\cdot\LL=\la\{xy\}\ra=\LL\cdot\LL\cdot\LL$.
\end{proof}

Next we characterize semigroups $X$ whose Stone-\v Cech extension $\beta(X)$ is a band. A sequence $(x_n)_{n\in\w}$ of points of some set $X$ is called {\em injective} if $x_n\ne x_m$ for any distinct numbers $n,m\in\w$.

\begin{theorem}
For a band  $X$ the semigroup $\beta(X)$ is a band if and
only if for each injective sequence $(x_n)_{n\in\w}$ in $X$ there are numbers $n<m$ such that $x_nx_m\in\{x_n,x_m\}$.
\end{theorem}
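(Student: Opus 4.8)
The plan is to reduce everything to the idempotency of ultrafilters and then prove the two implications by a recursive extraction and a short structural lemma. Since $\beta(X)$ is a closed subsemigroup of $\upspace(X)$ and a product of two ultrafilters is again an ultrafilter, the semigroup $\beta(X)$ is a band precisely when $\U*\U=\U$ for every $\U\in\beta(X)$. As $X$ is a band, every principal ultrafilter is already idempotent, so only free ultrafilters require attention. For a free ultrafilter $\U$ I would use the standard specialization of formula~(\ref{extension}) to ultrafilters: for $B\subset X$ one has $B\in\U*\U$ if and only if $B^*:=\{x\in X:x^{-1}B\in\U\}\in\U$, where $x^{-1}B:=\{y\in X:x{*}y\in B\}$. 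Since taking $x^{-1}(\cdot)$ commutes with complementation and $\U$ is an ultrafilter, the assignment $B\mapsto B^*$ respects complements, so the equality $\U*\U=\U$ is equivalent to the single implication $B\in\U\Ra B^*\in\U$.

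For the direction asserting that the combinatorial condition implies that $\beta(X)$ is a band, I would argue by contradiction, extracting a forbidden sequence from a failure of idempotency. Suppose a free ultrafilter $\U$ and a set $B\in\U$ satisfy $B^*\notin\U$. Then $C:=B\setminus B^*=\{x\in B:x^{-1}B\notin\U\}\in\U$, and for each $x\in C$ the set $D_x:=\{y\in C:x{*}y\notin B\}$ lies in $\U$. Now I would build an injective sequence recursively: pick $x_0\in C$, and having chosen distinct $x_0,\dots,x_{n-1}\in C$, use that $C\cap\bigcap_{i<n}D_{x_i}\in\U$ is infinite (as $\U$ is free) to select a new point $x_n$ in it. For $i<n$ we then have $x_n\in D_{x_i}$, so $x_ix_n\notin B$; since $x_i,x_n\in C\subset B$, this forces $x_ix_n\notin\{x_i,x_n\}$. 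Thus $(x_n)$ is an injective sequence with $x_ix_n\notin\{x_i,x_n\}$ for all $i<n$, contradicting the hypothesis. Hence every free ultrafilter is idempotent and $\beta(X)$ is a band.

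For the converse I would isolate the following lemma: \emph{if $(x_n)_{n\in\w}$ is an injective sequence with $x_ix_j\notin\{x_i,x_j\}$ for all $i<j$, then in fact $x_ix_j\notin\{x_k:k\in\w\}$ for all $i<j$.} Indeed, if $x_ix_j=x_k$, then idempotency gives $x_ix_k=x_i(x_ix_j)=x_ix_j=x_k$ and $x_kx_j=(x_ix_j)x_j=x_ix_j=x_k$; the first identity together with $k\ne i$ rules out $k>i$ (else the pair $(i,k)$ would be admissible), forcing $k<i$, while the second identity rules out $k<j$, forcing $k>j$ --- contradicting $i<j$. Granting the lemma, write $A:=\{x_k:k\in\w\}$ and let $\U$ be any free ultrafilter with $A\in\U$. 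The lemma shows $F_n:=\{m\in\w:x_nx_m\in A\}\subset\{0,1,\dots,n\}$ is finite for every $n$, so $x_n^{-1}A\notin\U$ for all $n$, whence $A^*\notin\U$ while $A\in\U$. Therefore $\U*\U\ne\U$, so $\beta(X)$ is not a band, establishing the contrapositive.

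The step I expect to be the genuine obstacle is precisely the worry addressed by the lemma: a priori the products $x_ix_j$ might fall back into the sequence $A$, which would make the free ultrafilter on $A$ behave like an idempotent and defeat the construction in the converse direction. The resolution is the double-inequality argument above, which uses associativity and idempotency to pin the hypothetical index $k$ simultaneously below $i$ and above $j$; this is what forces all such products out of $A$ and makes every free ultrafilter on $A$ non-idempotent. Once this is in place, both implications are short, and no Ramsey-type or order-theoretic machinery is needed beyond the band identities.
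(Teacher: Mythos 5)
Your proof is correct, and one half of it takes a genuinely different route from the paper's. In the direction ``the sequence condition implies $\beta(X)$ is a band'' you and the paper do essentially the same thing: argue the contrapositive by recursively extracting an injective bad sequence from a non-idempotent ultrafilter. The details differ --- you work directly with a witnessing set $B\in\U$, $B\notin\U*\U$, via $C=B\setminus B^*$ and the sets $D_x=\{y\in C:xy\notin B\}$, whereas the paper routes through the sets $X^{\uparrow}_{\F}=\{x:{\uparrow}x\in\F\}$ and $X^{\downarrow}_{\F}=\{x:{\downarrow}x\in\F\}$ and shows neither belongs to $\F$ --- but these are variants of one idea, and yours is arguably the more streamlined. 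The real divergence is in the direction ``$\beta(X)$ a band implies the condition.'' Given a bad sequence, the paper applies Ramsey's theorem to a $4$-coloring of triples to pass to an infinite subsequence whose products $x_nx_m$ ($n<m$) avoid the entire subsequence, and then shows a free ultrafilter on it is not idempotent. Your key lemma shows that for a band no passage to a subsequence is needed: if $x_ix_j=x_k$, then idempotency and associativity give $x_ix_k=x_k$ and $x_kx_j=x_k$, which pin $k<i$ and $k>j$ simultaneously, an absurdity; so the avoidance holds automatically for the original sequence. What each approach buys: yours is more elementary (no Ramsey) but uses the band hypothesis essentially; the paper's Ramsey argument never invokes idempotency, so it in fact proves the stronger statement that in an \emph{arbitrary} semigroup an injective sequence with $x_nx_m\notin\{x_n,x_m\}$ for all $n<m$ yields a non-idempotent ultrafilter. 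One expository slip to repair: your $F_n$ is a set of indices while $x_n^{-1}A$ is a subset of $X$, so the inference should read ``$x_n^{-1}A\cap A=\{x_m:m\in F_n\}$ is finite, and since $A\in\U$ and every member of a free ultrafilter is infinite, $x_n^{-1}A\notin\U$'' --- a one-line fix, not a gap.
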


\begin{proof} To prove the ``only if'' part, assume that $(x_n)_{n\in\w}$ is an injective sequence in $X$ such that $x_nx_m\notin\{x_n,x_m\}$ for all $n<m$. We claim that there is an infinite subset $\Omega\subset \w$ such that $x_nx_m\ne x_k$ for any numbers $n,m,k\in\Omega$ with $n<m$.
For this we shall apply the famous Ramsey Theorem. Consider the 4-coloring $\chi:[\w]^3\to 4=\{0,1,2,3\}$ of the set $[\w]^3=\{(k,n,m)\in\w^3:k<n<m\}$, defined by
$$\chi(k,n,m)=\begin{cases}
1 &\mbox{if $x_kx_n=x_m$},\\
2 &\mbox{if $x_kx_m=x_n$},\\
3 &\mbox{if $x_nx_m=x_k$},\\
0 &\mbox{otherwise}.
\end{cases}
$$By the Ramsey Theorem \cite[5.1]{P}, there is an infinite set $\Omega\subset\w$ such that $\chi(\Omega^3\cap[\w]^3)$ is a singleton. It follows from the definition of the coloring $\Omega$ that this singleton is $\{0\}$, which means that for any numbers $k,n,m\in\Omega$ with $n<m$ and $k\notin\{n,m\}$ we get $x_nx_m\ne x_k$. Since $x_nx_m\notin\{x_n,x_m\}$ for any numbers $n<m$, we conclude that $x_nx_m\ne x_k$ for any numbers $k,n,m\in\Omega$ with $n<m$.

Now take any free ultrafilter $\A$ that contains the set $A=\{x_n\}_{n\in\Omega}$. Then for every $n\in\w$ the set $A_{>n}=\{x_m:n<m\in\Omega\}$ belongs to the ultrafilter $\A$. The choice of the sequence $A=\{x_n\}_{n\in\Omega}$ guarantees that $A\cap \bigcup_{n\in\Omega}x_n*A_{>n}=\emptyset$, which implies that $\A\ne\A*\A$ and hence the ultrafilter $\A$ is not an idempotent in $\beta(X)$.
\smallskip

To prove the ``if'' part, assume that $\beta(X)$ is not a band and find an ultrafilter $\F\in\beta(X)$ with $\F*\F\neq\F$. In particular,
$\F*\F\nsubseteq\F$. This implies that for some $A\in\F$ and
$\{A_x\}_{x\in A}\subset\F$ the set $\bigcup_{x\in
A}x{*}A_x\notin\F$.

Consider the set $X^{\uparrow}_{\F}=\{x\in X: {\uparrow}x\in\F\}$ where ${\uparrow}x=\{y\in X:xy=x\}$.
We claim that $X^{\uparrow}_{\F}\notin\F$. Assuming that
$X^{\uparrow}_{\F}\in\F$, we conclude that $A\cap X^{\uparrow}_{\F}\in\F$.
This implies that ${\uparrow}a\in\F$ and ${\uparrow}a\cap
A_a\in\F$ for any $a\in A\cap X^{\uparrow}_{\F}$. Therefore
$a*({\uparrow}a\cap A_a)=\{a\}$ and hence $$\bigcup_{x\in
A}x*A_x\supset\bigcup_{x\in A\cap
X^{\uparrow}_{\F}}x*({\uparrow}x\cap A_x)=\bigcup_{x\in A\cap
X^{\uparrow}_{\F}}\{x\}=A\cap X^{\uparrow}_{\F}\in\F.$$ Thus
$\bigcup_{x\in A}x{*}A_x\in\F$. This contradiction shows that $X^{\uparrow}_{\F}\notin\F$.

Next, consider the set $X^{\downarrow}_{\F}=\{x\in X:
{\downarrow}x\in\F\}$ where ${\downarrow}x=\{y\in X:xy=y\}$. We claim that
$X^{\downarrow}_{\F}\notin\F$. Assume that
$X^{\downarrow}_{\F}\in\F$. Then $A\cap X^{\downarrow}_{\F}\in\F$.
This implies that ${\downarrow}a\in\F$ and ${\downarrow}a\cap
A_a\in\F$ for any $a\in A\cap X^{\downarrow}_{\F}$. Therefore
$${\downarrow}a\cap A_a\subset a*({\downarrow}a\cap A_a)\subset a*A_a\subset   \bigcup_{x\in A}x*A_x.$$ Thus
$\bigcup_{x\in A}x{*}A_x\in\F$. This contradiction shows that $X^{\downarrow}_{\F}\notin\F$.

Since $\F$ is an ultrafilter, $X^{\uparrow}_{\F}\cup X^{\downarrow}_{\F}\notin\F$ and
$Z_{\F}=X\setminus(X^{\uparrow}_{\F}\cup
X^{\downarrow}_{\F})\in\F$. Let $x_0\in Z_{\F}$ be arbitrary and by induction, for every $n\in\w$ choose a point
$x_{n+1}\in Z_{\F}\setminus\bigcup_{i\leq
n}({\uparrow}x_i\cup{\downarrow}x_i)\in\F$.
 Then the injective sequence $(x_n)_{n\in\w}$ has the required property:
$x_nx_m\notin\{x_n,x_m\}$ for $n<m$ (which follows from $x_m\notin {\downarrow}x_n\cup{\uparrow}x_n$).
\end{proof}

A subset $A$ of a semigroup $X$ is called an {\em antichain} if $ab\notin\{a,b\}$ for any distinct points $a,b\in A$.
Theorem implies the following characterization:

\begin{corollary} For a semilattice  $X$ the semigroup $\beta(X)$ is a band if and only if each antichain in $X$ is finite.
\end{corollary}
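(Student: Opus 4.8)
The plan is to derive this Corollary directly from the previous Theorem, exploiting the fact that a semilattice is \emph{commutative}: commutativity turns the one-sided condition on injective sequences appearing in the Theorem into the symmetric condition defining an antichain, so the two characterizations become almost tautologically equivalent.

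First I would handle the ``only if'' part contrapositively. Assume that $X$ contains an infinite antichain $A$. Since $A$ is infinite, one can enumerate an injective sequence $(x_n)_{n\in\w}$ of pairwise distinct elements of $A$. By the definition of an antichain, $x_nx_m\notin\{x_n,x_m\}$ for all distinct indices $n,m$, in particular for all $n<m$. Hence the sufficient-and-necessary condition of the previous Theorem fails for this sequence, and consequently $\beta(X)$ is not a band.

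Next, for the ``if'' direction, suppose that every antichain in $X$ is finite and, towards a contradiction, that the condition of the previous Theorem fails: there is an injective sequence $(x_n)_{n\in\w}$ with $x_nx_m\notin\{x_n,x_m\}$ for all $n<m$. At this point I would invoke commutativity: since $X$ is a semilattice, $x_nx_m=x_mx_n$, so the relation $x_nx_m\notin\{x_n,x_m\}$ in fact holds for \emph{every} pair of distinct indices $n,m$, not merely for $n<m$. Therefore the set $\{x_n:n\in\w\}$ is an infinite antichain, contradicting the assumption that all antichains in $X$ are finite. Thus the condition of the previous Theorem holds and $\beta(X)$ is a band.

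I do not expect any genuine obstacle here, since the whole argument is a translation between the language of injective sequences and that of antichains; the only point requiring attention is the passage from the one-sided inequality $n<m$ in the statement of the Theorem to the two-sided (symmetric) antichain condition, and this is exactly what the commutativity of the semilattice $X$ supplies for free.
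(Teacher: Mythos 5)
Your proof is correct and is essentially the derivation the paper intends: the corollary is stated as an immediate consequence of the preceding theorem, and your translation between injective sequences violating the theorem's condition and infinite antichains, using commutativity of the semilattice to symmetrize the $n<m$ condition, is exactly the argument the paper leaves implicit.
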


\section{Semilattices whose extensions are commutative}

In this section we recognize the structure of semilattices $X$ whose extensions $\upspace(X)$, $N_2(X)$ or $\lambda(X)$ are commutative.

Commutative semigroups of ultrafilters were characterized in \cite[4.27]{HS} as follows:

\begin{theorem}\label{t2.4} The Stone-\v Cech extension $\beta(X)$ of a semigroup $S$ is not commutative if and only if there are sequences $(x_n)_{n\in\w}$ and $(y_n)_{n\in\w}$ in $X$ such that $\{x_ky_n:k<n\}\cap\{y_kx_n:k<n\}=\emptyset$.
\end{theorem}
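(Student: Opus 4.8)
The plan is to reduce everything to the standard membership rule for products of ultrafilters: for $\U,\V\in\beta(X)$ and a set $S\subseteq X$ one has $S\in\U*\V$ if and only if $\{x\in X:x^{-1}S\in\V\}\in\U$, where $x^{-1}S=\{y\in X:xy\in S\}$. Accordingly, $\beta(X)$ fails to be commutative exactly when there exist $\U,\V\in\beta(X)$ and a set $S\subseteq X$ with $S\in\U*\V$ but $S\notin\V*\U$. I would prove the two implications of the theorem separately, writing $X$ for the semigroup throughout.

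For the direction asserting that the combinatorial condition forces non-commutativity, suppose sequences $(x_n)_{n\in\w}$ and $(y_n)_{n\in\w}$ with $\{x_ky_n:k<n\}\cap\{y_kx_n:k<n\}=\emptyset$ are given. Fix a free ultrafilter $\W$ on $\w$ and let $\U,\V\in\beta(X)$ be its pushforwards along $n\mapsto x_n$ and $n\mapsto y_n$, so that $A\in\U$ iff $\{n:x_n\in A\}\in\W$, and similarly for $\V$. Unwinding the membership rule, a set $S$ lies in $\U*\V$ iff $\{k:\{n:x_ky_n\in S\}\in\W\}\in\W$, and in $\V*\U$ iff $\{k:\{n:y_kx_n\in S\}\in\W\}\in\W$. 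Applied to $C=\{x_ky_n:k<n\}$ this gives $C\in\U*\V$, since for each $k$ the set $\{n:x_ky_n\in C\}$ contains the cofinite set $\{n:n>k\}$. On the other hand $C\notin\V*\U$, because for each $k$ the set $\{n:y_kx_n\in C\}$ is contained in $\{0,\dots,k\}$: whenever $n>k$ the product $y_kx_n$ lies in $\{y_kx_n:k<n\}$, which by hypothesis is disjoint from $C$. Hence $\U*\V\ne\V*\U$.

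For the converse I would argue that a single witnessing set $S$ can be used to separate the two families of products. Assume $\beta(X)$ is not commutative and fix $S,\U,\V$ with $S\in\U*\V$ and $S\notin\V*\U$. Put $U_0=\{x:x^{-1}S\in\V\}$ and $V_0=\{y:y^{-1}(X\setminus S)\in\U\}$. The hypothesis $S\in\U*\V$ gives $U_0\in\U$ directly, while $S\notin\V*\U$ means $\{y:y^{-1}S\in\U\}\notin\V$; using that $\U,\V$ are ultrafilters, the complement of this set equals $V_0$, whence $V_0\in\V$. The idea is that, for $x\in U_0$, a $\V$-large set of $y$ has $xy\in S$, and for $y\in V_0$ a $\U$-large set of $x$ has $yx\notin S$. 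I would therefore build the sequences recursively inside $U_0$ and $V_0$: having chosen $x_0,\dots,x_{n-1}\in U_0$ and $y_0,\dots,y_{n-1}\in V_0$, first pick $x_n$ in the $\U$-set $U_0\cap\bigcap_{k<n}y_k^{-1}(X\setminus S)$ (nonempty, as it lies in $\U$), which secures $y_kx_n\notin S$ for all $k<n$; then pick $y_n$ in the $\V$-set $V_0\cap\bigcap_{k<n}x_k^{-1}S$, which secures $x_ky_n\in S$ for all $k<n$. The outcome is $\{x_ky_n:k<n\}\subseteq S$ and $\{y_kx_n:k<n\}\subseteq X\setminus S$, so these two sets are disjoint, as required.

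The heart of the argument is the membership rule for ultrafilter products; granting it, the forward implication is a one-line calculation and the converse is a routine recursion. The one step that requires genuine insight is the converse's decision to aim the two families of products at opposite sides of the same set $S$, turning the abstract inequality $S\in\U*\V$, $S\notin\V*\U$ into concrete disjointness. I expect the only real care to be bookkeeping with the indices, namely checking that the constraints on the pair $(x_n,y_n)$ involve only the earlier terms $x_k,y_k$ with $k<n$, so that each set intersected in the recursion genuinely remains in the relevant ultrafilter and hence is nonempty.
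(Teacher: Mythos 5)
Your proof is correct; note, though, that the paper contains no proof of this statement at all --- it is quoted directly from \cite[4.27]{HS}. Your argument (pushing a free ultrafilter $\W$ on $\w$ forward along the two sequences to get $\U,\V$ with $\{x_ky_n:k<n\}\in\U*\V\setminus\V*\U$ for sufficiency; recursively choosing $x_n\in U_0\cap\bigcap_{k<n}y_k^{-1}(X\setminus S)$ and $y_n\in V_0\cap\bigcap_{k<n}x_k^{-1}S$ for necessity) is essentially the standard Hindman--Strauss proof of that theorem, and every step, including the translation of the paper's product formula into the membership rule $S\in\U*\V$ iff $\{x\in X:x^{-1}S\in\V\}\in\U$ for ultrafilters, checks out.
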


This characterization implies the following (well-known) fact:

\begin{corollary}\label{c2.5} If the Stone-\v Cech extension $\beta(X)$ of a semilattice $X$ is commutative, then each linear subsemigroup in $X$ in finite.
\end{corollary}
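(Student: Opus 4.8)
The plan is to prove the contrapositive: if $X$ contains an infinite linear subsemigroup $L$, then $\beta(X)$ is not commutative. Since $X$ is a semilattice and $L$ is a linear subsemigroup, $L$ is an infinite chain for the order $\le$ (any two elements of $L$ are comparable, and for comparable $a\le b$ the product is the meet $ab=a=\min\{a,b\}$). First I would extract from $L$ a strictly monotone injective sequence $(c_n)_{n\in\w}$: every infinite chain contains either a strictly increasing or a strictly decreasing injective sequence, which follows from the standard ``monotone subsequence'' argument, or directly from the Ramsey Theorem already invoked in this section, applied to the $2$-coloring of pairs recording which of two chosen points is smaller.

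Having fixed such a sequence $(c_n)$, I would interleave it by setting $x_n=c_{2n}$ and $y_n=c_{2n+1}$ for all $n\in\w$, and then verify the Hindman--Strauss criterion of Theorem~\ref{t2.4}. The key observation is that in a chain every product is a minimum, so $x_ky_n=\min\{c_{2k},c_{2n+1}\}$ and $y_kx_n=\min\{c_{2k+1},c_{2n}\}$. For indices $k<n$ the comparison of the subscripts $2k,2k+1$ against $2n,2n+1$ is decided purely by size, so the resulting minimum depends only on whether $(c_n)$ is increasing or decreasing, not on the particular values.

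In the increasing case one gets $x_ky_n=c_{2k}=x_k$ and $y_kx_n=c_{2k+1}=y_k$ for all $k<n$, so that $\{x_ky_n:k<n\}=\{c_{2k}:k\in\w\}$ consists of the even-indexed terms while $\{y_kx_n:k<n\}=\{c_{2k+1}:k\in\w\}$ consists of the odd-indexed terms. In the decreasing case the minima flip to the larger subscript and one obtains $\{x_ky_n:k<n\}=\{c_{2n+1}:n\ge 1\}$ and $\{y_kx_n:k<n\}=\{c_{2n}:n\ge 1\}$, again an odd-indexed set and an even-indexed set. In either case the two sets lie in disjoint (even versus odd indexed) parts of the injective chain $(c_n)$ and hence are disjoint, so Theorem~\ref{t2.4} shows that $\beta(X)$ is not commutative, completing the contrapositive.

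The only genuinely delicate point is the extraction of a strictly monotone sequence from the infinite chain $L$; once this is in place, the interleaving $x_n=c_{2n}$, $y_n=c_{2n+1}$ makes the disjointness in Theorem~\ref{t2.4} essentially automatic, since products in a chain reduce to minima and the even/odd separation of indices survives both the increasing and the decreasing case.
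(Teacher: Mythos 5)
Your proposal is correct and follows essentially the same route as the paper's proof: extract a strictly monotone injective sequence from the infinite chain (the paper also cites Ramsey's Theorem for this), interleave it as $x_n=z_{2n}$, $y_n=z_{2n+1}$, and apply Theorem~\ref{t2.4}. The only difference is that you spell out the even/odd disjointness verification that the paper leaves implicit, which is a welcome addition rather than a deviation.
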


\begin{proof} Assume conversely that $X$ contains an infinite linear subsemilattice $L$. Being linear, $L$ is linearly ordered by the order $\le$ defined by $x\le y$ iff $xy=x$. Since $L$ is infinite, we can apply Ramsey Theorem in order to find an injective sequence $(z_n)_{n\in\w}$ in $L$,  which is either strictly increasing or strictly decreasing. Put $x_n=z_{2n}$ and $y_n=z_{2n+1}$ for $n\in\w$. Applying Theorem~\ref{t2.4} to the sequences $(x_n)_{n\in\w}$ and $(y_n)_{n\in\w}$ we conclude that the semigroup $\beta(L)$ is not commutative. Then $\beta(X)$ is not commutative neither.
\end{proof}

In spite of Theorem~\ref{t2.4} the following problem seems to be open.

\begin{problem} Describe the structure of a semilattice $X$ whose Stone-\v Cech extension $\beta(X)$ is commutative.
\end{problem}

A similar problem on commutativity of semigroups $\upspace(X)$ also is open:

\begin{problem} Characterize semigroups $X$ whose extension $\upspace(X)$ is commutative.\newline {\rm (It can be shown that if $\upspace(X)$ is commutative, then $X$ is a commutative semigroup with finite linear idempotent band $E=\{x\in X:xx=x\}$ and $x^3=x^4$ for all $x\in X$).}
\end{problem}

We shall resolve this problem for bands. First we prove a useful result on multiplication of upfamilies on linear semigroups.

For a semigroup $X$ denote by $\uupspace(X)$ the subsemigroup of $\upspace(X)$ consisting of all upfamilies $\A\in\upspace(X)$ such that for each set $A\in\A$ there is a finite subset $F\in\A$ with $F\subset A$.

For a semigroup $X$ and two upfamilies $\A,\mathcal B\in\upspace(X)$ let $$\A\otimes\mathcal B=\la A*B:A\in\A,\;B\in\mathcal B\ra.$$
It is clear that $\A\otimes\mathcal B\subset\A*\mathcal B$. In the following theorem we show that for finite linear semigroups the converse inclusion also holds.

\begin{theorem}\label{t2.1} If $X$ is a linear semigroup, then $\A*\mathcal B=\A\otimes\mathcal B$ for any upfamilies $\A\in\uupspace(X)$ and $\mathcal B\in\upsilon(X)$.
\end{theorem}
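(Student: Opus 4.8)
The plan is to prove the nontrivial inclusion $\A*\mathcal B\subset\A\otimes\mathcal B$, since the reverse inclusion $\A\otimes\mathcal B\subset\A*\mathcal B$ is already observed in the text. A generic basic set of $\A*\mathcal B$ has the form $W=\bigcup_{x\in A}x{*}B_x$ where $A\in\A$ and $(B_x)_{x\in A}\in\mathcal B^A$. I must show $W\in\A\otimes\mathcal B$, i.e., that $W$ contains some set of the form $A'{*}B'$ with $A'\in\A$ and $B'\in\mathcal B$. The key leverage is that $\A\in\uupspace(X)$: I may replace $A$ by a \emph{finite} subset $F=\{x_1,\dots,x_k\}\in\A$ with $F\subset A$, so that $W\supset\bigcup_{i=1}^k x_i{*}B_{x_i}$, and now I only have to combine \emph{finitely many} of the $B_{x_i}$'s into a single member of $\mathcal B$.

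First I would reduce to the finite index set $F$ and set $B=\bigcap_{i=1}^k B_{x_i}$; the trouble is that $\mathcal B$ is only an upfamily, not a filter, so this intersection need not lie in $\mathcal B$. **The hard part will be** extracting, for a \emph{single} factor, a set $B'\in\mathcal B$ with $x_i{*}B'\subset W$ for all $i$ simultaneously. Here I would exploit linearity of $X$: for each pair $(x_i,b)$ with $b\in B_{x_i}$ we have $x_ib\in\{x_i,b\}$. I partition each $B_{x_i}$ according to whether $x_ib=x_i$ (the ``absorbing'' case) or $x_ib=b$. If for some $i$ the absorbing part $\{b\in B_{x_i}:x_ib=x_i\}$ belongs to $\mathcal B$, then picking any $B'\in\mathcal B$ inside it gives $x_i{*}B'=\{x_i\}\subset x_i{*}B_{x_i}\subset W$, and choosing $A'=\{x_i\}\in\A$ (using that $F\subset A\in\A$, upward closure gives $\{x_i\}$ need not be in $\A$, so instead I take $A'=F$ and note $F{*}B'\supset x_i{*}B'$) yields a set of the desired $\A\otimes\mathcal B$ form contained in $W$. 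Otherwise, for every $i$ the set $\{b\in B_{x_i}:x_ib=b\}$ belongs to $\mathcal B$; intersecting these finitely many members is still not automatically in $\mathcal B$, so the genuine obstacle is precisely this finite-intersection step.

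To overcome it I would argue that in the non-absorbing case the $x_i$ act as the \emph{identity} on the relevant part, so $x_i{*}B'=B'$ whenever $B'\subset\{b:x_ib=b\}$; hence if I can find a single $B'\in\mathcal B$ with $B'\subset\bigcap_{i=1}^k\{b\in B_{x_i}:x_ib=b\}$, then $x_i{*}B'\subset B'\subset W$ for all $i$ and $F{*}B'\subset W$ gives $W\in\A\otimes\mathcal B$. I expect the resolution is to treat the two cases not globally but to observe that \emph{for each $i$} either the absorbing or the non-absorbing part is in $\mathcal B$ is \emph{not} guaranteed for an arbitrary upfamily; so the correct move is a finer case analysis combining the upfamily structure of $\mathcal B$ with linearity, possibly by induction on $k=|F|$, peeling off one $x_i$ at a time. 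At each induction step I would use linearity to show that multiplying by $x_i{*}(\cdot)$ either fixes a $\mathcal B$-large subset or collapses it to the singleton $\{x_i\}$, both of which keep the product inside $W$ and of the required tensor form.

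Finally I would assemble the pieces: having produced $A'\in\A$ (namely $F$, or a singleton from $F$) and $B'\in\mathcal B$ with $A'{*}B'\subset W$, I conclude $W\in\la A'{*}B'\ra\subset\A\otimes\mathcal B$, and since $W$ was an arbitrary basic member of $\A*\mathcal B$, upward closure gives $\A*\mathcal B\subset\A\otimes\mathcal B$, completing the proof. The essential obstacle throughout is that $\mathcal B$ is merely an upfamily, so I cannot intersect its members freely; the remedy is that linearity forces each left-multiplication $x_i{*}(\cdot)$ to behave either as a constant map or as the identity, which is exactly what lets me avoid taking intersections and instead select one witnessing set from $\mathcal B$.
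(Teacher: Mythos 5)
Your overall strategy --- reduce to a finite set $F\in\A$ and then find a \emph{single} set $B'\in\mathcal B$ with $F*B'\subset W$ --- is the right one, and it is also the paper's strategy; but your execution has two genuine gaps, one of which you concede yourself. First, the dichotomy on which your case analysis rests (either for some $i$ the absorbing part $\{b\in B_{x_i}:x_ib=x_i\}$ lies in $\mathcal B$, or else for every $i$ the identity part $\{b\in B_{x_i}:x_ib=b\}$ lies in $\mathcal B$) is false for general upfamilies: if $\mathcal B=\la\{b_1,b_2\}\ra$ with $x_ib_1=x_i$ and $x_ib_2=b_2$, then neither part of $\{b_1,b_2\}$ belongs to $\mathcal B$. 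You acknowledge this and fall back on an ``induction on $k$, peeling off one $x_i$ at a time,'' but no induction step is actually given, and that is exactly where the difficulty sits. Second, your absorbing case is incomplete even on its own terms: from $x_i*B'=\{x_i\}\subset W$ you cannot conclude $W\in\A\otimes\mathcal B$; you need $A'*B'\subset W$ for some $A'\in\A$, and taking $A'=F$ forces you to control the remaining translates $x_j*B'$, $j\ne i$, which linearity only bounds by $\{x_j\}\cup B'$ --- sets that need not meet $W$ at all. (Your parenthetical ``$F*B'\supset x_i*B'$'' points the wrong way: what is needed is an upper bound for $F*B'$, not a lower one.)

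The idea you are missing is to run the absorption/identity dichotomy on the \emph{whole} sets $B_a$, never on their parts --- so that membership of any part in $\mathcal B$ is never required --- and to exploit the order coming from linearity. Write $b\le a$ iff $ab=b$, enumerate $F=\{a_1,\dots,a_k\}$ so that $a_i\le a_{i+1}$, and set $C=\bigcup_{i=1}^k a_i*B_{a_i}\subset W$. Case 1: some $a_i$ fixes $B_{a_i}$ pointwise; take the least such $i$. For $j\ge i$ the element $a_j$ also fixes $B_{a_i}$ pointwise (since $B_{a_i}\le a_i\le a_j$), while for $j<i$ minimality gives some $b_j\in B_{a_j}$ with $a_jb_j\ne b_j$, hence $a_jb_j=a_j$ by linearity, so $a_j\in C$; therefore $F*B_{a_i}\subset\{a_1,\dots,a_{i-1}\}\cup B_{a_i}\subset C$. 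Case 2: every $a_i$ absorbs some point of $B_{a_i}$, so $a_i\in C$ for each $i$; now pair $F$ with $B_{a_k}$ for the top element $a_k$: given $b\in B_{a_k}$ and any $i$, either $a_ib=a_i\in C$, or $a_ib=b$, in which case $b=(a_ka_i)b=a_k(a_ib)=a_kb\in a_k*B_{a_k}\subset C$, using $a_ka_i=a_i$. Either way $F*B'\subset C$ for a single $B'\in\mathcal B$ paired with all of $F$, whence $C$, and so $W$, belongs to $\A\otimes\mathcal B$ --- precisely the conclusion your sketch could not reach.
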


\begin{proof} On the semigroup $X$ consider the relation $\le$ defined by:  $x\le y$ iff $yx=x$. This relation is reflexive and transitive. For a subsets $A\subset X$ and a point $x\in X$ we write $A\le x$ if $a\le x$ for all $a\in A$. It follows from the definition of the semigroup operation $*$ on $\upspace(X)$ that $\A\otimes\mathcal B\subset\A*\mathcal B$. To prove the reverse inclusion, fix any basic set $C=\bigcup_{a\in A}a{*}B_a\in\A*\mathcal B$ where $A\in\A$ and $B_a\in\mathcal B$ for all $a\in A$. Since $\A\in\uupspace(X)$, we can assume that the set $A$ is finite and hence can be enumerated as $A=\{a_1,\dots,a_n\}$ so that $a_i\le a_{i+1}$ for all $i<n$.
Now let us consider two cases.

1. For some $i\le n$ we get $B_{a_i}\le a_i$, which means that $a_ib=b$ for all $b\in B_{a_i}$ and hence $a_i*B_{a_i}=B_{a_i}$. For every $j\ge i$ the inequality $B_{a_i}\le a_i\le a_j$ implies $a_j*B_{a_i}=B_{a_i}$. Consequently, $A*B_{a_i}\subset\{a_1,\dots,a_{i-1}\}\cup B_{a_i}$.

We can assume that $i$ is the smallest number such that $B_{a_i}\le a_i$. 
In this case the minimality of $i$ implies that $B_{a_j}\not\le a_j$ for all $j<i$. This means $b_j\not\le a_j$ for some $b_j\in B_{a_j}$ and hence $a_jb_j=a_j$ (as $a_jb_j\in\{a_j,b_j\}$ and $a_jb_j\ne b_j$). Then $a_j{*}B_{a_j}\ni a_jb_j=a_j$ and thus $A*B_{a_i}\subset\{a_1,\dots,a_{i-1}\}\cup B_{a_i}\subset\bigcup_{j=1}^na_jB_{a_j}$, which implies that $C\in\A\otimes\mathcal B$.

2. $B_{a_i}\not\le a_i$ for all $i\le n$. In this case $a_i\in a_i*B_{a_i}$ for all $i$.
Observe that for any $b\in B_{a_n}$ and $i\le n$ we get $a_ib\in\{a_i,b\}$ by the linearity of $X$.
If $a_ib\ne a_i$, then $a_ib=b$ and $a_ib=b=a_na_ib=a_nb=\in a_nB_{a_n}$. So,
$$\A\otimes \mathcal B\ni A*B_{a_n}\subset \{a_1,\dots,a_n\}\cup a_nB_{a_n}\subset \bigcup_{i=1}^na_iB_{a_i}=C$$ and hence  $C\in\A\otimes\mathcal B$.
\end{proof}

Now we are able to characterize bands $X$ with commutative extensions $\upspace(X)$ and $N_2(X)$.

\begin{theorem}\label{t2.2} For a band $X$ the following conditions are equivalent:
\begin{enumerate}
\item $X$ is a finite linear semilattice;
\item the semigroup $\upsilon(X)$ is commutative;
\item the semigroup $N_2(X)$ is commutative;
\item the semigroup $\lambda(X)$ is commutative and $(1,2)$-Clifford.
\end{enumerate}
\end{theorem}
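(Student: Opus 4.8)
The plan is to prove the four conditions equivalent by establishing the cycle $(1)\Ra(2)\Ra(3)\Ra(1)$ together with the two implications $(1)\Ra(4)\Ra(1)$. The implication $(2)\Ra(3)$ is free, since commutativity passes to the closed subsemigroup $N_2(X)\subset\upspace(X)$, so the real work sits in $(1)\Ra(2)$ and in the two implications returning to $(1)$.

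For $(1)\Ra(2)$ I would first observe that finiteness of $X$ forces $\uupspace(X)=\upspace(X)$: every set is finite, so every upfamily is trivially finitely generated. Hence Theorem~\ref{t2.1} applies to each pair $\A,\B\in\upspace(X)$ and yields $\A*\B=\A\otimes\B$. Because $\A\otimes\B=\la A{*}B:A\in\A,\ B\in\B\ra$ is built from the setwise products $A{*}B=\{ab:a\in A,\ b\in B\}$, and these satisfy $A{*}B=B{*}A$ as $X$ is commutative, the operation $\otimes$ is symmetric; therefore $\A*\B=\A\otimes\B=\B\otimes\A=\B*\A$, and $\upspace(X)$ is commutative.

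Next, $(1)\Ra(4)$ combines two inputs: linearity of $X$ makes $\lambda(X)$ a band by Theorem~\ref{t1.1}, hence $(1,2)$-Clifford, while commutativity of $\lambda(X)$ is inherited from the commutativity of $\upspace(X)$ just proved. For $(4)\Ra(1)$ I would argue that commutativity of $\lambda(X)\supset\beta(X)\supset X$ makes the band $X$ commutative, i.e.\ a semilattice; then the $(1,2)$-Clifford hypothesis and Theorem~\ref{t(1,2)} force $X$ to be linear; and finally commutativity of $\beta(X)$ together with Corollary~\ref{c2.5} shows that the linear semilattice $X$, being a linear subsemigroup of itself, is finite.

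The main obstacle is $(3)\Ra(1)$, because commutativity of $\lambda(X)$ alone does \emph{not} force linearity (for the three-point semilattice $\{x,y,z\}$ with $xy=z$ the superextension $\lambda(X)$ is already commutative), so one must genuinely use the larger semigroup $N_2(X)$. Exactly as above, commutativity of $N_2(X)\supset X$ makes $X$ a semilattice and, through $\beta(X)\subset N_2(X)$ and Corollary~\ref{c2.5}, keeps every linear subsemigroup finite; it remains to prove linearity. Arguing by contraposition, I would take incomparable $x,y$ with $z=xy\notin\{x,y\}$, record the semilattice identities $xz=yz=zz=z$, and consider the filter $\A=\la\{x,y\}\ra\in\varphi(X)$ and the linked upfamily $\B=\la\{x,y\},\{x,z\},\{y,z\}\ra\in N_2(X)$. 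A short computation then shows $\{z\}\in\A*\B$: over the generator $\{x,y\}\in\A$ choose $V_x=\{y,z\}$ and $V_y=\{x,z\}$, so that $x{*}V_x=y{*}V_y=\{z\}$ and their union is $\{z\}$. On the other hand every basic set of $\B*\A$ meets $\{x,y\}$, since each generator $U$ of $\B$ contains $x$ or $y$ and the corresponding factor $V_a\supseteq\{x,y\}$ returns that element (e.g.\ $x\in x{*}V_x$), whence $\{z\}\notin\B*\A$. Thus $\A*\B\neq\B*\A$ and $N_2(X)$ is not commutative, closing the contrapositive. The delicate point throughout is precisely this asymmetry between the coordinate-dependent choice of the sets $V_a$ permitted in $*$ and the rigid second factor in $\otimes$: this gap is what survives when $X$ fails to be linear, and it is what the whole argument must isolate.
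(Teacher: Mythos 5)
Your proposal is correct and takes essentially the same route as the paper: $(1)\Ra(2)$ via Theorem~\ref{t2.1} together with the symmetry of $\otimes$ on a commutative $X$, the identical $(1)\Leftrightarrow(4)$ argument through Theorem~\ref{t(1,2)} and Corollary~\ref{c2.5}, and $(3)\Ra(1)$ by exhibiting a non-commuting pair in $N_2(X)$. Your witness $\B=\la\{x,y\},\{x,z\},\{y,z\}\ra$ differs only cosmetically from the paper's $\la\{x,xy\},\{y,xy\}\ra$, and you supply in full the verification that the paper dismisses with ``it can be shown that,'' which is a welcome addition rather than a deviation.
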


\begin{proof} The implication $(1)\Ra(2)$ follows from Theorem~\ref{t2.1} as
$\A*\mathcal B=\A\otimes\mathcal B=\mathcal B\otimes \A=\mathcal B*\A$ for every $\A,\mathcal B\in\upspace^\bullet(X)=\upspace(X)$.
\smallskip

The implication $(2)\Ra(3)$ is trivial.
\smallskip

$(3)\Ra(1)$ Assume that the semigroup $N_2(X)$ is commutative. Then so is the semigroup $X$. Being a commutative band, the semigroup $X$ is a semilattice. Assuming that $X$ is not linear, we can find two points $x,y\in X$ with $xy\notin\{x,y\}$. It can be shown that
the linked upfamilies $\A=\langle \{x,y\}\rangle$ and
$\mathcal B=\langle \{x,xy\}, \{y,xy\}\rangle\in N_k(X)$ do not commute because $\{xy\}\in\A{*}\mathcal B\setminus\mathcal B{*}\A$.
Therefore, $X$ is a linear semilattice. Since $\beta(X)\subset\upspace(X)$ is commutative, Corollary~\ref{c2.5} implies that the linear semilattice $X$ is finite.
\smallskip

$(1)\Leftrightarrow(4)$ If $X$ is a finite linear semilattice, then $\lambda(X)$ is commutative by the  implication $(1)\Ra(2)$ of this theorem and is $(1,2)$-Clifford by Theorem~\ref{t(1,2)}.

If the semigroup $\lambda(X)$ is commutative and $(1,2)$-Clifford, then the semigroup  $X\subset\lambda(X)$ is commutative and by Theorem~\ref{t(1,2)}, $X$ is linear. By Corollary~\ref{c2.5}, the linear semilattice $X$ is finite.
\end{proof}

Now we shall characterize semilattices $X$ with commutative superextension $\lambda(X)$. A semilattice $X$ is called a {\em bush} if for any maximal linear subsemilattices $A,B\subset X$ the product $A*B$ is the singleton $\{\min X\}$ containing the smallest element $\min X$ of $X$. This definition implies that $A\cap B=A*B=\{\min X\}$. By a {\em branch} of a bush $X$ we understand a maximal linear subsemilattice of $X$.

\begin{theorem}\label{t2.6} A semilattice $X$ has commutative superextension $\lambda(X)$ if and only if $X$ is a bush with finite branches.
\end{theorem}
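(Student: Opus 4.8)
The plan is to prove both implications, treating the finiteness of branches and the bush property separately, and to reduce the sufficiency of the condition to commutativity on the finite branches via Theorem~\ref{t2.2}. Throughout I will use the reformulation of the operation~(\ref{extension}): writing $x^{-1}C=\{y\in X:xy\in C\}$, one has $C\in\U*\V$ if and only if $\{x\in X:x^{-1}C\in\V\}\in\U$.

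\emph{Necessity.} Suppose $\lambda(X)$ is commutative. Since $\beta(X)$ is a closed subsemigroup of $\lambda(X)$, it is commutative too, so Corollary~\ref{c2.5} shows that every linear subsemilattice of $X$, in particular every branch, is finite. The absence of infinite chains then forces a least element: every $x$ lies above a minimal element (chains being finite), and two minimal elements $m_1,m_2$ satisfy $m_1m_2\le m_1,m_2$, whence $m_1=m_1m_2=m_2$ by minimality; denote this element by $0=\min X$. It remains to show that $X$ is a bush, i.e.\ that incomparable elements have product $0$. Assuming the contrary, I fix incomparable $a,b\in X$ with $c=ab\ne 0$, so $0<c<a$ and $0<c<b$, and I test the two maximal linked systems $\LL=\la\{a,b\},\{a,c\},\{b,c\}\ra$ and $\mathcal{M}=\la\{a,b\},\{a,0\},\{b,0\}\ra$, each maximal linked because it is the family of all sets meeting a fixed $3$-element set in at least two points. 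A short computation with $C=\{c\}$ gives $\{x:x^{-1}\{c\}\in\mathcal{M}\}\cap\{a,b,c\}=\{c\}\notin\LL$, while $\{x:x^{-1}\{c\}\in\LL\}\supset\{a,b\}\in\mathcal{M}$; hence $\{c\}\in\mathcal{M}*\LL\setminus\LL*\mathcal{M}$, contradicting commutativity.

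\emph{Sufficiency.} Now assume $X$ is a bush with finite branches, so $X\setminus\{0\}$ is a disjoint union of finite chains $X_i\setminus\{0\}$ ($i\in I$), with $xy=\min(x,y)$ inside a branch and $xy=0$ across branches. Fixing $\LL,\mathcal{M}\in\lambda(X)$, I must show $\LL*\mathcal{M}=\mathcal{M}*\LL$, and I again use $C\in\LL*\mathcal{M}\Leftrightarrow D_C:=\{x:x^{-1}C\in\mathcal{M}\}\in\LL$. The key observation is that in a bush the preimage localizes: if $0\notin C$, then $xy\in C$ forces $x,y$ into a common branch $X_i$ with $\min(x,y)\in C$, so $x^{-1}C\subseteq X_i$ depends only on the chain through $x$. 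For such $C$ the question whether $C\in\LL*\mathcal{M}$ reduces to products of the traces of $\LL,\mathcal{M}$ on individual finite branches, and there commutativity is supplied by Theorem~\ref{t2.2}, each branch being a finite linear semilattice; this makes the equivalence $C\in\LL*\mathcal{M}\Leftrightarrow C\in\mathcal{M}*\LL$ manifestly symmetric for every $C$ with $0\notin C$.

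The remaining case, $0\in C$, is the one I expect to be the main obstacle. Here $0^{-1}C=X$ lies in every upfamily, so $0\in D_C$ automatically, and the cross-branch products $xy=0\in C$ pour large pieces of $X$ into $x^{-1}C$, so the behaviour cannot be confined to a single branch. The plan is to encode each of $\LL,\mathcal{M}$ by the set of branches $X_i$ it contains together with its finite traces on those branches, and to show, using maximal linkedness together with the bush identity $A*B=\{0\}$ for sets $A,B$ meeting distinct branches, that $D_C\in\LL$ holds precisely when the same combinatorial data is realized as $D'_C:=\{x:x^{-1}C\in\LL\}\in\mathcal{M}$. In other words, I must verify that the collapse to the bottom element $0$ occurs symmetrically in the two orders; once this last symmetry is established, commutativity of $\lambda(X)$ follows.
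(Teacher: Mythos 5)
Your proposal has genuine gaps in both directions, and the one in the necessity part is fatal: you reformulate ``$X$ is a bush'' as ``incomparable elements have product $0$'', and this reformulation is false. A bush (distinct maximal chains meeting, and multiplying, into $\{\min X\}$) must in particular be a tree: every lower set ${\downarrow}z$ must be a chain. Consider the diamond semilattice $X=\{0,u,v,w\}$ with $u,v$ incomparable, $uv=0$ and $u,v<w$. Here all branches are finite, $0=\min X$ exists, and every incomparable pair has product $0$, so $X$ passes every test your necessity argument performs; yet $X$ is not a bush (the maximal chains $\{0,u,w\}$ and $\{0,v,w\}$ share $w$), and $\lambda(X)$ is not commutative: for $\A=\la\{u,v\},\{u,w\},\{v,w\}\ra$ and $\B=\la\{u,v\},\{u,0\},\{v,0\}\ra$ one checks $\{u,v\}\in\B*\A\setminus\A*\B$ (indeed $u^{-1}\{u,v\}=\{u,w\}\notin\B$ and $v^{-1}\{u,v\}=\{v,w\}\notin\B$, while every member of $\A$ contains $u$ or $v$, so $\{x:x^{-1}\{u,v\}\in\B\}\notin\A$). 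Commutativity thus fails for a reason your argument cannot detect: your certificate $\{c\}\in\mathcal M*\LL\setminus\LL*\mathcal M$ exists only when some incomparable pair has \emph{nonzero} product. The paper's proof needs, and supplies, a second, different certificate exactly for this diamond configuration (incomparable $x,y$ below a common upper bound $z$, with separating set $\{x,y\}$ rather than $\{xy\}$), proving first that $X$ is a tree and only then that the tree is a bush. Your computation for the case $c=ab\ne 0$ is correct and coincides with the paper's second certificate, but by itself it proves a strictly weaker statement than ``$X$ is a bush''.

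The sufficiency part is also incomplete, as you acknowledge: the case $0\in C$ is left as a ``plan'' still to be verified, and even for $0\notin C$ the reduction to Theorem~\ref{t2.2} is not justified --- the traces of $\LL,\mathcal M$ on a branch need not be maximal linked systems, and whether $D_C\in\LL$ is a global condition on $\LL$, not one that can be read off branch by branch. The paper avoids all of this bookkeeping by exploiting maximality: since $\A*\B$ and $\B*\A$ are both maximal linked, they are equal as soon as every basic set $C_{AB}=\bigcup_{a\in A}a*B_a$ of $\A*\B$ meets every basic set $C_{BA}=\bigcup_{b\in B}b*A_b$ of $\B*\A$. If these were disjoint, one of them, say $C_{AB}$, would miss $\min X$; by the bush structure and the linkedness of $\B$ this forces all the sets $\{a\}\cup B_a$ into one finite branch, where Theorem~\ref{t2.1} gives $C_{AB}\supset AB'$ for some $B'\in\B$, and two applications of linkedness produce $b\in B\cap B'$ and $a\in A\cap A_b$ with $ab=ba\in C_{AB}\cap C_{BA}$, a contradiction. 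To complete your proof you would need either to adopt this maximality argument or to actually establish the ``$0\in C$'' symmetry you postponed; as written, the proposal does not prove the theorem in either direction.
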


\begin{proof} First assume that $X$ is a bush with finite branches, and take any two maximal linked systems $\A,\mathcal B\in\lambda(X)$. Since the products $\A*\mathcal B$ and $\mathcal B*\A$ are maximal linked upfamilies, the equality $\A*\mathcal B=\mathcal B*\A$ will follow as soon as we check that any two basic sets $C_{AB}=\bigcup_{a\in A}a{*}B_a\in\A*\mathcal B$ and $C_{BA}=\bigcup_{b\in B}b{*}A_b\in\mathcal B*\A$ have non-empty intersection. Here $A\in\A$, $(B_a)_{a\in A}\in \mathcal B^A$, $B\in\mathcal B$, and $(A_b)_{b\in B}\in\A^B$.  Assume conversely that $C_{AB}\cap C_{BA}=\emptyset$. Then either $\min X\notin C_{AB}$ or $\min X\notin C_{BA}$.

Without loss of generality, $\min X\notin C_{AB}$. Then $\min X\notin A$ and for each $a\in A$ the set $\{a\}\cup B_a$ lies in a branch of $X$. Since branches of $X$ meet only at the point $\min X$, all the sets $\{a\}\cup B_a$, $a\in A$, lie in the same (finite) branch. Repeating the argument of Theorem~\ref{t2.1}, we can show that $C_{AB}\supset AB'$ for some set $B'\in\mathcal B$. Since $\mathcal B$ is linked, there is a point $b\in B\cap B'$. By the same reason, there is a point $a\in A\cap A_b$. Then $ab=ba\in AB'\cap bA_b\subset C_{AB}\cap C_{BA}$ and we are done.
\smallskip

Now assume that $X$ is a semilattice with commutative superextension  $\lambda(X)$. Corollary~\ref{c2.5} implies that all branches of $X$ are finite. We claim that for every $z\in X$ the lower set ${\downarrow}z=\{x\in X:xz=x\}$ is linear. Assuming the converse, find two points $x,y\in{\downarrow}z$ such that $xy\notin\{x,y\}$. It follows that the points $x,y,z,xy$ are pairwise distinct. It is easy to check that the maximal linked upfamilies $\A=\la\{x,y\},\{x,z\},\{y,z\}\ra$ and $\mathcal B=\la\{x,y\},\{x,xy\},\{y,xy\}\ra$ do not commute because $\{x,y\}\in\mathcal B*\A\setminus\A*\mathcal B$. Thus ${\downarrow}z$ is linear for every $z\in X$, which means that $X$ is a tree.

Assuming that the tree $X$ is not a bush, we can find two points $x,y\in X$ such that $xy\notin\{x,y,z\}$ where $z=\min X$. Now consider the maximal linked systems $\A=\la \{x,y\},\{x,z\},\{y,z\}\ra$ and $\mathcal B=\la\{x,y\},\{x,xy\},\{y,xy\}\ra$ and observe that they do not commute as $\{xy\}\in \A*\mathcal B$ misses the set $\{x,y,z\}\in\mathcal B*\A$.
\end{proof}

\section{Semigroups whose extensions are semilattices}

In this section we shall characterize semigroups $X$ whose extensions $\upspace(X)$, $\lambda(X)$, $\varphi(X)$, or $N_2(X)$ are semilattices.

\begin{theorem}\label{t3.1} For a semigroup $X$ the following conditions are equivalent:
\begin{enumerate}
\item $X$ is finite linear semilattice;
\item $\upsilon(X)$ is a semilattice;
\item $\lambda(X)$ is a semilattice;
\item $\varphi(X)$ is a semilattice.
\end{enumerate}
\end{theorem}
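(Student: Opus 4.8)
The plan is to prove Theorem~\ref{t3.1} by establishing a cycle of implications $(1)\Ra(2)\Ra(3,4)\Ra(1)$, reusing as much of the machinery already developed as possible. The key observation is that a semilattice is exactly a commutative band, so a semigroup $S$ is a semilattice if and only if it is both commutative and a band. This lets me reduce the theorem to two characterizations I already have in hand: Theorem~\ref{t1.1} characterizes when $\upspace(X)$, $\lambda(X)$, $\varphi(X)$ are bands (iff $X$ is linear), and Theorems~\ref{t2.2} and~\ref{t2.6} characterize when these extensions are commutative.

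\medskip
For the implication $(1)\Ra(2)$, I would assume $X$ is a finite linear semilattice. By Theorem~\ref{t1.1} ($(1)\Ra(2)$), linearity of $X$ makes $\upspace(X)$ a band. By Theorem~\ref{t2.2} ($(1)\Ra(2)$), the fact that $X$ is a finite linear semilattice makes $\upspace(X)$ commutative. A commutative band is a semilattice, so $\upspace(X)$ is a semilattice. The implications $(2)\Ra(3)$ and $(2)\Ra(4)$ are then immediate, since $\lambda(X)$ and $\varphi(X)$ are closed subsemigroups of $\upspace(X)$, and a subsemigroup of a semilattice is a semilattice.

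\medskip
The remaining task is to close the cycle by proving $(3)\Ra(1)$ and $(4)\Ra(1)$. Here I would run each hypothesis through both of the relevant characterizations. Suppose $\lambda(X)$ is a semilattice; then it is in particular a band, so by Theorem~\ref{t1.1} ($(4)\Ra(1)$) the semigroup $X$ is linear. Being also commutative, $\lambda(X)$ is commutative and (as a band) trivially $(1,2)$-Clifford, so condition~(4) of Theorem~\ref{t2.2} holds; this forces condition~(1) of that theorem, namely that $X$ is a finite linear semilattice. The argument for $(4)\Ra(1)$ is parallel: if $\varphi(X)$ is a semilattice then it is a band, so Theorem~\ref{t1.1} ($(3)\Ra(1)$) gives that $X$ is linear, and since $X\subset\varphi(X)$ is a commutative subsemigroup, $X$ is a commutative linear band, i.e.\ a linear semilattice; finiteness then follows from Corollary~\ref{c2.5}, since $\beta(X)\subset\varphi(X)$ is commutative and an infinite linear semilattice would be an infinite linear subsemigroup of itself.

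\medskip
\textbf{Main obstacle.} I expect the only genuine subtlety to be bookkeeping: making sure that each extension whose semilattice property is assumed is routed through the correct earlier theorem with the correct hypothesis, since the band characterizations live in Theorem~\ref{t1.1} while the commutativity/finiteness characterizations live in Theorems~\ref{t2.2} and~\ref{t2.6}. In particular, for $(4)\Ra(1)$ one must extract finiteness without appealing to Theorem~\ref{t2.2} (which is stated for bands via $\upspace(X)$, not directly for $\varphi(X)$), so the cleanest route is to invoke Corollary~\ref{c2.5} on the already-established fact that $X$ is a linear semilattice with commutative $\beta(X)$. No new combinatorial construction of witnessing maximal linked systems should be required, since all the work of separating products was already done in Theorems~\ref{t2.2} and~\ref{t2.6}.
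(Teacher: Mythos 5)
Your proposal is correct and follows essentially the same route as the paper: $(1)\Ra(2)$ by combining Theorems~\ref{t1.1} and~\ref{t2.2} (commutative band $=$ semilattice), trivial $(2)\Ra(3,4)$, and $(4)\Ra(1)$ via Theorem~\ref{t1.1} plus Corollary~\ref{c2.5} exactly as the paper does (you even silently correct the paper's slip, which writes $\beta(X)\subset\lambda(X)$ where $\beta(X)\subset\varphi(X)$ is what is needed). The one divergence is $(3)\Ra(1)$: the paper cites Theorem~\ref{t2.6} (the bush characterization), whereas you first get linearity of $X$ from Theorem~\ref{t1.1} and then invoke condition~(4) of Theorem~\ref{t2.2}, using that a semilattice is a commutative band and hence $(1,2)$-Clifford; this is equally valid, non-circular (Theorem~\ref{t2.2}$(4)\Leftrightarrow(1)$ rests only on Theorem~\ref{t(1,2)} and Corollary~\ref{c2.5}), and avoids the bush machinery altogether.
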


\begin{proof} $(1)\Ra(2)$ If $X$ is a finite linear semilattice, then $\upsilon(X)$ is a semilattice (=commutative band) by Theorems~\ref{t1.1} and \ref{t2.2}.
\smallskip

The implications $(2)\Ra(3,4)$ are trivial.
\smallskip

The implication $(3)\Ra(1)$ follows from Theorems~\ref{t1.1} and \ref{t2.6}.
\smallskip

$(4)\Ra(1)$ Assume that $\varphi(X)$ is a semilattice. Then $X$, being a subsemigroup of the commutative semigroup $\varphi(X)$ is commutative. Since $\varphi(X)$ is a band, $X$ is a linear semigroup by Theorem~\ref{t1.1}. Thus $X$, being a commutative linear semigroup, is a linear semilattice.
Since the subsemigroup $\beta(X)\subset\lambda(X)$ is commutative, the linear semilattice $X$ is finite by Corollary~\ref{c2.5}.
\end{proof}

\section{Semigroups whose extensions are linear}

In this section we characterize semigroups $X$ whose extensions $\upspace(X)$, $\lambda(X)$ or $\varphi(X)$ are linear semigroups.

A semigroup $S$ is called a {\em semigroup of left} ({\em right}) {\em zeros} if $xy=x$ (resp. $xy=y$) for all $x,y\in X$.

\begin{theorem}\label{t4.1} For a semigroup $X$ the semigroup $\upspace(X)$ is linear if and only if $X$ is either a semigroup of right zeros or a semigroup of left zeros.
\end{theorem}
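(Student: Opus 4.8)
The plan is to treat the two implications separately, with the ``if'' direction being a direct computation and the ``only if'' direction requiring a structural analysis of linear bands.

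For the ``if'' direction I would simply evaluate the product formula~(\ref{extension}). If $X$ is a semigroup of left zeros, then $x{*}V_x=\{x\}$ for every nonempty $V_x$, so $\bigcup_{x\in U}x{*}V_x=U$ and hence $\A*\mathcal B=\la U:U\in\A\ra=\A$ for all $\A,\mathcal B\in\upspace(X)$; thus $\upspace(X)$ is itself a semigroup of left zeros, in particular linear. Dually, if $X$ is a semigroup of right zeros, then $x{*}V_x=V_x$, so every basic set $\bigcup_{x\in U}V_x$ contains some $V_{x_0}\in\mathcal B$ and hence belongs to $\mathcal B$, while taking all $V_x$ equal to a fixed $V\in\mathcal B$ realizes every $V\in\mathcal B$; therefore $\A*\mathcal B=\mathcal B$ and $\upspace(X)$ is a semigroup of right zeros.

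For the ``only if'' direction, suppose $\upspace(X)$ is linear. Being linear it is a band, so Theorem~\ref{t1.1} yields that $X$ is linear, and it remains to upgrade ``linear'' to ``left- or right-zero''. I would split into two cases according to whether $X$ possesses two distinct commuting elements. If $x\ne y$ satisfy $xy=yx$, then by linearity we may assume $xy=yx=x$, so $\{x,y\}$ is a two-element subsemilattice, and I would exhibit the witnesses $\A=\la\{x,y\}\ra$ and $\mathcal B=\la\{x\},\{y\}\ra$. Choosing $V_x=V_y=\{x\}$ shows $\{x\}\in\A*\mathcal B$, while no basic set of $\A*\mathcal B$ can lie inside $\{y\}$, since the term $x{*}V_x$ always contains an element distinct from $y$ (as $xv\in\{x,v\}$ can equal $y$ only when $xy=y$, whereas $xy=x$). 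Hence $\{x\}\in\A*\mathcal B$ but $\{x\}\notin\A$, and $\{y\}\in\mathcal B$ but $\{y\}\notin\A*\mathcal B$; therefore $\A*\mathcal B\notin\{\A,\mathcal B\}$, contradicting linearity of $\upspace(X)$. So this case cannot occur.

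The remaining case is that $X$ is a linear band with no two distinct commuting elements, and the heart of the argument is to show that such $X$ must be left- or right-zero. For distinct $x,y$ the conditions $xy\ne yx$ and $xy,yx\in\{x,y\}$ force either $xy=x,\ yx=y$ (call $\{x,y\}$ an \emph{L-pair}) or $xy=y,\ yx=x$ (an \emph{R-pair}). The key sublemma, proved by a short associativity computation from $x(zy)=(xz)y$ and $z(xy)=(zx)y$, is that an L-pair $\{x,y\}$ and an R-pair $\{y,z\}$ sharing a vertex force $xz=zx=x$, i.e.\ a commuting pair, which is impossible. Applying this sublemma to the pair $\{y,u\}$ for a putative L-pair $\{x,y\}$ and R-pair $\{u,v\}$ then rules out the coexistence of the two pair types, so all pairs share one type: all L-pairs gives $xy=x$ for all $x,y$ (left zeros), all R-pairs gives $xy=y$ for all $x,y$ (right zeros). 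I expect this associativity bookkeeping to be the main obstacle, the rest reducing to routine evaluation of~(\ref{extension}).
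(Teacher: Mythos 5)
Your proposal is correct, and it diverges from the paper's own proof in a genuinely interesting way. Your ``if'' direction and the first half of your ``only if'' argument coincide with the paper's: the paper rules out distinct commuting elements using exactly your witnesses $\A=\la\{x,y\}\ra$ and $\mathcal B=\la\{x\},\{y\}\ra$ (it computes $\mathcal B*\A=\la\{x\}\ra\notin\{\A,\mathcal B\}$ where you compute $\A*\mathcal B$; both products give the contradiction). The divergence is in how the coexistence of the two pair types is excluded. The paper stays inside $\upspace(X)$: given a right pair $(x,y)$ and a left pair $(a,b)$, it multiplies the filters $\la\{x,a\}\ra*\la\{y,b\}\ra$ to force $a=y$, and then multiplies $\la\{x,b\}\ra*\la\{a\}\ra$ to contradict linearity of $\upspace(X)$ a second time. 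You instead prove a purely algebraic sublemma inside $X$: in a linear band with no distinct commuting elements, an L-pair $\{x,y\}$ and an R-pair $\{y,z\}$ sharing a vertex force $xz=zx=x$ by associativity (indeed $x(zy)=(xz)y$ rules out $xz=z$, and $z(xy)=(zx)y$ rules out $zx=z$ --- I checked that both computations close up correctly), producing a distinct commuting pair, which your first step already forbids; a short vertex-chasing argument over the pair $\{y,u\}$ then kills coexistence of the two types altogether. Your route isolates a clean, reusable semigroup-theoretic fact (a linear semigroup with no two distinct commuting elements is left-zero or right-zero) and makes the structural half of the proof entirely independent of $\upspace(X)$, hence more elementary; the paper's route avoids any associativity bookkeeping and keeps the whole proof uniform in style, every step being a filter computation that re-uses the linearity hypothesis on $\upspace(X)$. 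When writing yours up, do record the degenerate coincidences (e.g.\ $x=z$ in the sublemma, or $y=u$ in its application), each of which collapses immediately because no two-element set can be simultaneously an L-pair and an R-pair.
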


\begin{proof} If $X$ is a semigroup of left zeros, then for any upfamilies $\A,\mathcal B\in\upspace(X)$ and any basic element $\bigcup_{x\in A}xB_x\in\A*\mathcal B$ we get $\bigcup_{x\in A}xB_x=\bigcup_{x\in A}\{x\}=A$ and thus $\A*\mathcal B\subset\A$. On the other hand, each $A\in\A$ belongs to $\mathcal A*\mathcal B$ as $A=A*B\in\A*\mathcal B$ for any $B\in\mathcal B$.

 Assume that the semigroup $\upspace(X)$ is linear. Then $X$, being a subsemigroup of $\upspace(X)$, also is linear. Let $x,y$ be any two distinct elements of $X$. First we prove that $xy\ne yx$.
Assume conversely that $xy=yx$. Then $xy=yx\in\{x,y\}$ and we lose no generality assuming that $xy=x$. Now consider two upfamilies $\A=\la\{x,y\}\ra$ and $\mathcal B=\la\{x\},\{y\}\ra$ and observe that
$$\mathcal B*\A=\la\{xx,xy\},\{yx,yy\}\ra=\la\{x\},\{x,y\}\ra=\la\{x\}\ra\notin\{\A,\mathcal B\},$$ so $\upsilon(X)$ is not linear and this is a required contradiction.

Thus $xy\ne yx$ for all distinct points $x,y\in X$. We call a pair $(x,y)\in X^2$ {\em left} if $xy=x$ and $yx=y$ and {\em right} if $xy=y$ and $yx=x$. Since $X$ is linear, each pair $(x,y)\in X^2$ is either left or right. We claim that either all pairs $(x,y)\in X^2$ are left or else all such pairs are right. Assuming the opposite, find pairs $(x,y),(a,b)\in X^2$ such that $(x,y)$ is not left and $(a,b)$ is not right. Then $x\ne y$, $a\ne b$ and the pair $(x,y)$ is right while $(a,b)$ is left. Consider the filters $\A=\la\{x,a\}\ra$ and $\mathcal B=\la\{y,b\}\ra$ and observe that $\A*\mathcal B=\la\{xy,xb,ay,ab\}\ra=\la\{y,xb,ay,a\}\ra$. Since $\upspace(X)$ is linear, either $\A*\mathcal B=\A$ or $\A*\mathcal B=\mathcal B$. In the first case $\{x,a\}\supset\{y,xb,ay,a\} \supset\{y,a\}$ and hence $y=a$. In the second case, $\{y,a\}\subset\{y,b\}$ and thus $a=y$. Now consider the filters $\mathcal C=\la \{x,b\}\ra$ and $\mathcal D=\la\{a\}\ra$ and observe that $\mathcal C*\mathcal D=\la\{xa,ba\}\ra=\la\{xy,b\}\ra=\la\{y,b\}\ra=\la\{a,b\}\ra\notin\{\mathcal C,\mathcal D\}$, which contradicts the linearity of $\upspace(X)$.

Therefore either each pair $(x,y)\in X^2$ is left and then $X$ is a semigroup of left zeros or else each pair $(x,y)\in X^2$ is right and then $X$ is a semigroup of right zeros.
\end{proof}

\begin{theorem}\label{t4.2} For a semigroup $X$ the following conditions are equivalent:
\begin{enumerate}
\item the semigroup $\varphi(X)$ is linear;
\item the semigroup $N_2(X)$ is linear;
\item either $X$ is a semigroup of left zeros or $X$ is a semigroup of right zeros or else $X$ is a semilattice of order $|X|\le 2$.
\end{enumerate}
\end{theorem}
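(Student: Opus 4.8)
The plan is to establish the cycle $(3)\Ra(2)\Ra(1)\Ra(3)$. The implication $(2)\Ra(1)$ is immediate: every filter is linked, since the intersection of two members of a filter is again a (nonempty) member, so $\varphi(X)$ is a subsemigroup of $N_2(X)$ and linearity is inherited by subsemigroups. For $(3)\Ra(2)$ I would treat the three alternatives separately. If $X$ is a semigroup of left (right) zeros, then the computation in the proof of Theorem~\ref{t4.1} (and its right-handed analogue) gives $\A*\mathcal B=\A$ (resp. $\A*\mathcal B=\mathcal B$) for all $\A,\mathcal B\in\upsilon(X)$, so the subsemigroup $N_2(X)$ is itself a semigroup of left (right) zeros, hence linear. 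If $X$ is a semilattice with $|X|\le 2$, then $X$ is a finite linear semilattice, $\upsilon(X)$ is a semilattice by Theorem~\ref{t3.1}, and it remains to check that the finite subsemigroup $N_2(X)$ is linear; for $X=\{0,1\}$ one inspects directly that $N_2(X)=\{\la\{0\}\ra,\la\{0,1\}\ra,\la\{1\}\ra\}$ is the three-element chain $\la\{0\}\ra<\la\{0,1\}\ra<\la\{1\}\ra$.

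The heart of the argument is $(1)\Ra(3)$. Since a linear semigroup is a band, $\varphi(X)$ is a band, so $X$ is linear by Theorem~\ref{t1.1}; thus $X$ is a linear band. The decisive device is an embedding of the power semigroup into $\varphi(X)$: for nonempty $S\subseteq X$ the principal filter $\la S\ra=\{A\subseteq X:S\subseteq A\}$ lies in $\varphi(X)$, and a direct check of the definition of $*$ shows that $ST=\{st:s\in S,\ t\in T\}$ is the smallest basic set of the product, whence
$$\la S\ra*\la T\ra=\la ST\ra\qquad(\emptyset\ne S,T\subseteq X).$$
Therefore $S\mapsto\la S\ra$ is an injective homomorphism of the semigroup $(\mathcal P^\ast(X),\cdot)$ of nonempty subsets of $X$ into $\varphi(X)$, and the linearity of $\varphi(X)$ forces $ST\in\{S,T\}$ for all nonempty $S,T\subseteq X$.

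From this combinatorial condition I would finish as follows. Applying it to $S=\{x,y\}$ and $T=\{z\}$ with $x,y,z$ pairwise distinct yields $\{xz,yz\}\in\{\{x,y\},\{z\}\}$, which by linearity of $X$ means either $xz=yz=z$, or $xz=x$ and $yz=y$. Hence, as soon as $|X|\ge 3$, the element $z$ cannot be absorbed by one partner and absorbing for another, so either $wz=w$ for all $w$ (so $z$ is a right identity) or $wz=z$ for all $w$ (so $z$ is a right zero); symmetrically, using $S=\{z\}$ and $T=\{x,y\}$, each $z$ is a left identity or a left zero. A short analysis of the four resulting element types—left zero with right identity, left identity with right zero, two-sided zero, two-sided identity—then finishes the proof when $|X|\ge 3$: a two-sided zero or a two-sided identity would force $|X|\le 2$, and a left zero cannot coexist with a distinct right zero; hence either every element is a left zero (and $X$ is a semigroup of left zeros) or every element is a right zero (and $X$ is a semigroup of right zeros). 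Finally, if $|X|\le 2$ the linear band $X$ is automatically a semigroup of left zeros, a semigroup of right zeros, or a two-element semilattice, which is exactly~(3).

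I expect $(1)\Ra(3)$ to be the main obstacle, and within it the real content is the power-semigroup embedding, which converts the unwieldy linearity of $\varphi(X)$ into the transparent subset condition $ST\in\{S,T\}$. After that the work is elementary bookkeeping on element types; the only point requiring a little care is verifying that a two-sided zero or a two-sided identity cannot coexist with further elements once $|X|\ge 3$.
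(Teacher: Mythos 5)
Your proposal is correct, and for the decisive implication $(1)\Ra(3)$ it takes a genuinely different route from the paper. The paper first rules out commuting pairs: assuming two distinct elements of the linear band $X$ commute, it produces in each of three cases an explicit pair of principal filters whose product is neither factor, and then finishes by re-entering the proof of Theorem~\ref{t4.1} (the left/right-pair dichotomy) to conclude that $X$ consists of left zeros or of right zeros. You instead isolate, as a stand-alone lemma, the identity $\la S\ra*\la T\ra=\la ST\ra$ for principal filters, i.e.\ the embedding of the power semigroup of nonempty subsets into $\varphi(X)$; linearity of $\varphi(X)$ then collapses to the transparent condition $ST\in\{S,T\}$, from which you classify each element (using linearity of $X$, correctly imported from Theorem~\ref{t1.1}) as a right zero or right identity and as a left zero or left identity, and eliminate the mixed configurations. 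The identity $\la S\ra*\la T\ra=\la ST\ra$ is in fact the unstated engine behind every computation in the paper's proof (e.g.\ $\la\{z,y\}\ra*\la\{x,y\}\ra=\la\{zx,yx,zy,yy\}\ra$), so the two arguments run on the same fuel; the difference is organizational. Your version buys self-containedness and a cleaner logical skeleton --- it never needs the non-commutativity step nor the internal case analysis of Theorem~\ref{t4.1}, and the element-type bookkeeping (no two distinct two-sided identities, a left zero cannot coexist with a distinct right zero, a two-sided zero or identity caps $|X|$ at $2$) is routine to verify. The paper's version buys brevity by reusing prior work. Your treatment of $(3)\Ra(2)$ and $(2)\Ra(1)$ coincides with the paper's, including the identification $N_2(X)=\varphi(X)$ as a three-element chain for the two-element semilattice.
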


\begin{proof} $(3)\Ra(2)$ If $|X|=1$, then $N_2(X)$ is a singleton and hence is a
linear semigroup. If $X$ is a semilattice of order $|X|=2$, then $X=\{0,1\}$ for some elements $0,1$ with $0\cdot 1=1\cdot 0=0$. In this case $N_2(X)=\varphi(X)$ is a 3-element linear semilattice ordered as:
$$\la \{0\}\ra\le \la\{0,1\}\ra\le\la\{1\}\ra.$$

If $X$ is a semigroup of left or right zeros, then the semigroup $\upsilon(X)$ is linear by Theorem~\ref{t4.1} and so is its subsemigroup $N_2(X)$.
\smallskip

$(2)\Ra(1)$ Is the semigroup $N_2(X)$ is linear, then so is its subsemigroup $\varphi(X)$.
\smallskip

$(1)\Ra(3)$ Assume that the semigroup $\varphi(X)$ is linear.
Then $X$, being a subsemigroup of $\varphi(X)$, is linear as well.
If $|X|\le 2$, then either $X$ is a linear semilattice or a semigroup of left or right zeros. So, we assume that $|X|\ge 3$. We claim that distinct elements $x,y\in X$ do not commute. Assume conversely that $xy=yx$ for some distinct elements $x,y\in X$. Since $xy=yx\in\{x,y\}$ we lose no generality assuming that $xy=yx=x$. Fix any element $z\in X\setminus\{x,y\}$. Now consider 3 cases:

1. $zx=z$. In this case we can consider the filters $\A=\la\{z,y\}\ra$ and $\mathcal B=\la\{x,y\}\ra$ and observe that $\A*\mathcal B=\la\{zx,yx,zy,yy\}\ra=\la \{z,x,zy,y\}\ra\notin\{\A,\mathcal B\}$, which contradicts the linearity of $\varphi(X)$.

2. $zx=x$ and $zy=z$.  In this case we can consider the filters $\A=\la\{z,y\}\ra$ and $\mathcal B=\la\{x,y\}\ra$ and observe that $\A*\mathcal B=\la\{zx,yx,zy,yy\}\ra=\la \{x,x,z,y\}\ra\notin\{\A,\mathcal B\}$, which contradicts the linearity of $\varphi(X)$.

3. $zx=x$ and $zy=y$.  In this case we can consider the filters $\A=\la\{x,z\}\ra$ and $\mathcal B=\la\{y,z\}\ra$ and observe that $\A*\mathcal B=\la\{xy,xz,zy,zz\}\}\ra=\la \{x,xz,y,z\}\ra\notin\{\A,\mathcal B\}$, which again contradicts the linearity of $\varphi(X)$.

Those contradictions show that distinct elements of $X$ do not commute. Continuing as in the proof of Theorem~\ref{t4.1}, we can show that $X$ is a semigroup of right or left zeros.
\end{proof}

Finally, we characterize commutative semigroups with linear superextensions.

\begin{theorem}\label{t4.3} For a commutative semigroup $X$ the semigroup $\lambda(X)$ is linear if and only if $X$ is a linear semilattice of order $|X|\le 3$.
\end{theorem}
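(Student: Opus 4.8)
The plan is to prove the two implications separately, with all the difficulty concentrated in the ``only if'' part, which I would reduce to an explicit computation with two ``triangle'' maximal linked systems. Assume $X$ is commutative and $\lambda(X)$ is linear. Since a linear semigroup is a band, Theorem~\ref{t1.1} gives that $X$ is linear, and being commutative, $X$ is a linear semilattice; in particular it is linearly ordered by $x\le y\Leftrightarrow xy=x$, with $xy=\min\{x,y\}$. It then remains to show $|X|\le 3$, and this is the whole content. Assuming $|X|\ge 4$, I would choose four elements $x_1<x_2<x_3<x_4$ and set
$$\A=\la\{x_1,x_2\},\{x_1,x_4\},\{x_2,x_4\}\ra,\qquad \mathcal B=\la\{x_1,x_3\},\{x_1,x_4\},\{x_3,x_4\}\ra,$$
the triangles on $\{x_1,x_2,x_4\}$ and on $\{x_1,x_3,x_4\}$, and show that $\A*\mathcal B\notin\{\A,\mathcal B\}$, contradicting the linearity of $\lambda(X)$.

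The first point to settle is that $\A$ and $\mathcal B$ really lie in $\lambda(X)$, and this is the step where one might expect trouble, since $X$ may be far larger (even infinite) than $\{x_1,\dots,x_4\}$. The key observation that removes the obstacle is that a triangle $\la\{a,b\},\{a,c\},\{b,c\}\ra$ on three fixed points equals $\{S\subseteq X:|S\cap\{a,b,c\}|\ge 2\}$, and a set $S$ meets all three generating pairs if and only if $|S\cap\{a,b,c\}|\ge 2$; hence such a triangle coincides with its own maximal linked closure in \emph{every} ambient semilattice, so indeed $\A,\mathcal B\in\lambda(X)$. Both belong to $\uupspace(X)$ because their minimal members (the three pairs) are finite, so Theorem~\ref{t2.1} applies and reduces the product to $\A*\mathcal B=\la AB:A\in\A,\ B\in\mathcal B\ra$. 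Restricting to minimal generators, it suffices to multiply the three pairs of $\A$ against the three pairs of $\mathcal B$ using $x_ix_j=x_{\min\{i,j\}}$; a short finite computation yields the minimal members $\{x_1,x_2\}$, $\{x_1,x_3\}$, $\{x_1,x_4\}$ and $\{x_2,x_3,x_4\}$, so that
$$\A*\mathcal B=\la\{x_1,x_2\},\{x_1,x_3\},\{x_1,x_4\},\{x_2,x_3,x_4\}\ra.$$
Finally I would note that $\{x_1,x_3\}$ lies in this product but not in $\A$ (since $|\{x_1,x_3\}\cap\{x_1,x_2,x_4\}|=1$), while $\{x_1,x_2\}$ lies in it but not in $\mathcal B$; hence $\A*\mathcal B$ differs from both factors, completing the ``only if'' direction.

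For the ``if'' direction I would argue by direct inspection of the finitely many maximal linked systems on a chain of length at most $3$. If $|X|\le 2$ then $\lambda(X)=\beta(X)$ has at most two points and is trivially a chain. If $X=\{x_1<x_2<x_3\}$, then $\lambda(X)$ consists of the three principal systems $\la\{x_i\}\ra$ together with the single triangle $M=\la\{x_1,x_2\},\{x_1,x_3\},\{x_2,x_3\}\ra$; using Theorem~\ref{t2.1} one checks that $\la\{x_1\}\ra*\mathcal C=\la\{x_1\}\ra$ for every $\mathcal C$, that $M*\la\{x_2\}\ra=\la\{x_2\}\ra$, and that $M*\la\{x_3\}\ra=M$. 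Thus $\lambda(X)$ is the $4$-element chain
$$\la\{x_1\}\ra<\la\{x_2\}\ra<M<\la\{x_3\}\ra,$$
which is linear (its semilattice structure being guaranteed in advance by Theorem~\ref{t3.1}).

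In summary, the main obstacle I anticipate is purely the maximality claim for $\A$ and $\mathcal B$ in a general, possibly infinite, semilattice $X$; once that is resolved by the ``meets all pairs $\Leftrightarrow$ contains two of three vertices'' lemma, Theorem~\ref{t2.1} converts the product into a finite $\min$-computation and both directions become routine.
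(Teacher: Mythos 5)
Your proposal is correct, and it reorganizes the crucial ``only if'' step in a genuinely different (and more self-contained) way than the paper. The paper first computes the entire superextension $\lambda(4)$ of the $4$-element chain --- all $12$ elements together with its order diagram --- observes that this semilattice is not linear, and then disposes of any $X$ with $|X|>3$ by remarking that $\lambda(X)$ contains a subsemigroup isomorphic to $\lambda(4)$. That remark is left unproved: it requires knowing that a maximal linked system on a subsemilattice $Y\subset X$ extends to a maximal linked system on all of $X$ and that this extension $\lambda(Y)\to\lambda(X)$ is an injective homomorphism. Your argument sidesteps exactly this point: you build the two triangles $\A$ and $\mathcal B$ directly as upfamilies on $X$, and your observation that a triangle equals $\{S\subseteq X:|S\cap\{a,b,c\}|\ge 2\}$ --- hence is maximal linked in \emph{any} ambient semilattice --- replaces the missing embedding lemma, after which Theorem~\ref{t2.1} reduces $\A*\mathcal B$ to a finite $\min$-computation showing the product is neither factor. (Your $\A$ and $\mathcal B$ are precisely the images of the paper's $\Delta_2$ and $\Delta_1$ under the implicit embedding $\lambda(4)\to\lambda(X)$, and their incomparability is visible in the paper's diagram; so the witnesses agree, but your verification is local and complete.) What the paper's route buys is the full order structure of $\lambda(4)$, of independent interest; what yours buys is economy and the closing of that gap. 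A small bonus: your chain $\la\{x_1\}\ra<\la\{x_2\}\ra<M<\la\{x_3\}\ra$ for $|X|=3$ is the correct one, whereas the paper's printed chain $0\le\Delta\le 1\le 2$ transposes $\Delta$ and $1$: indeed $\Delta\cdot\la 1\ra=\la 1\ra$, so $\la 1\ra\le\Delta\le\la 2\ra$, which also matches the position of $\Delta_3$ in the paper's own $\lambda(4)$ diagram.
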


\begin{proof} If $X$ is a linear semilattice of order $|X|\le 2$, then the semigroup $\lambda(X)=X$ is linear.

If $X$ is a linear semilattice of order $|X|=3$, then $X$ can be identified with the set $3=\{0,1,2\}$ endowed with the operation $xy=\min\{x,y\}$.
The semigroup $\lambda(X)$ contains 4 elements: $0,1,2$ and $\Delta=\{A\subset 3:|A|\ge 2\}$. One can check that $\lambda(3)$ is a linear semilattice ordered as follows:
$$0\le \Delta\le 1\le 2.$$

This proves the ``if'' part of the theorem. To prove the ``only if'' part we first shall analyze the structure of the superextension $\lambda(4)$ of the semilattice $4=\{0,1,2,3\}$ endowed with the operation $xy=\min\{x,y\}$.
By Theorem~\ref{t3.1}, $\lambda(4)$ is a semilattice. It contains 12 elements: $$\la k\ra,\;\;\Delta_k=\la \{A\subset n:|A|=2,\;k\notin A\}\mbox{ \  and \ $\square_k=\la\{n\setminus\{k\},A:A\subset n,\;|A|=2,\;k\in A\}\ra$ \ where $k\in 4$}.$$
The order structure of the semilattice $\lambda(4)$ is described in the following diagram:
$$\xymatrix{
&\la 3\ra\\
&\square_3\ar[u]\\
\Delta_1\ar[ur]&\Delta_2\ar[u]&\Delta_0\ar[ul]\\
\square_0\ar[u]\ar[ur]&\square_2\ar[ul]\ar[ur]&\square_1\ar[u]\ar[ul]\\
&\la 2\ra\ar[u]\\
&\Delta_3\ar[u]\ar[uul]\ar[uur]\\
&\la1\ra\ar[u]\\
&\la0\ra\ar[u]
}
$$
Looking at this diagram we see that the semilattice $\lambda(4)$ is not linear.

Now assume that $X$ is a commutative semigroup whose superextension $\lambda(X)$ is linear. Then $X$ is a linear semilattice. If $|X|>3$, then $\lambda(X)$ is not linear as it contains a subsemigroup isomorphic to the semilattice $\lambda(4)$, which is not linear.
\end{proof}

\section{Lattices whose extensions are lattices}\label{s:lattice}

In this section we characterize lattices whose extensions $\upspace(X)$, $\lambda(X)$ or $\varphi(X)$ are lattices.

A {\em lattice} is a set $X$ endowed with two semilattice operations $\wedge,\vee:X\times X\to X$ such that $(x\wedge y)\vee y=y$ and $(x\vee y)\wedge y=y$ for all $x,y\in X$.

Both operations $\wedge$ and $\vee$ of a lattice $X$ can be extended to right-topological operations $\wedge$ and $\vee$ on the compact Hausdorff space $\upspace(X)$. Is it natural to ask if the triple $(\upspace(X),\wedge,\vee)$ is a lattice.

A lattice will be called {\em linear} if $x\wedge y,x\vee y\in\{x,y\}$ for all $x,y\in X$.

\begin{theorem} For a lattice $X$ the following conditions are equivalent:
\begin{enumerate}
\item $X$ is a linear lattice of order $|X|\le 2$.
\item $\upspace(X)$ is a lattice;
\item $\lambda(X)$ is a lattice;
\item $\varphi(X)$ is a lattice.
\end{enumerate}
\end{theorem}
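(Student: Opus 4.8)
The plan is to prove the cycle $(1)\Ra(2)\Ra(3),(4)\Ra(1)$, reducing every step to Theorems~\ref{t3.1} and~\ref{t2.1}. The implication $(2)\Ra(3,4)$ is immediate: both $\varphi(X)$ and $\lambda(X)$ are closed under each of the extended operations $\wedge$ and $\vee$ (being subsemigroups of $\upspace(X)$ with respect to each operation), hence they are subalgebras of $(\upspace(X),\wedge,\vee)$; since the absorption identities are universally quantified equalities, they pass to any subalgebra, so $\varphi(X)$ and $\lambda(X)$ are lattices whenever $\upspace(X)$ is.

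For $(1)\Ra(2)$ I would argue by a direct finite computation. The case $|X|=1$ is trivial; for $|X|=2$, say $X=\{0,1\}$ with $0<1$, the space $\upspace(X)$ consists of the four proper upfamilies $\la 0\ra$, $\la 1\ra$, $\la\{0,1\}\ra$ and $\la\{0\},\{1\}\ra$. Since $X$ is finite and linear under both $\min$ and $\max$, Theorem~\ref{t2.1} lets me compute all products on bases, elementwise. One then checks that the meet-order and the join-order on these four elements are mutually dual (they realize the Boolean lattice $2^2$, with $\la 0\ra,\la 1\ra$ extremal for $\wedge$ and swapped for $\vee$), so $(\upspace(X),\wedge,\vee)$ is a lattice.

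The substance of the theorem is the hard direction $(3)\Ra(1)$ and $(4)\Ra(1)$, which share a common opening. If $\lambda(X)$ (resp.\ $\varphi(X)$) is a lattice, then it is in particular a semilattice under $\wedge$ and, separately, under $\vee$; applying Theorem~\ref{t3.1} to each operation forces $(X,\wedge)$ and $(X,\vee)$ to be finite linear semilattices, so the lattice order of $X$ is linear and $X$ is a finite chain. To exclude $|X|\ge 3$, I fix $a<b<c$ in $X$ (so $\wedge=\min$, $\vee=\max$) and exhibit a failure of absorption, again using $\ast=\otimes$ from Theorem~\ref{t2.1}. For $\lambda(X)$ I take the ultrafilter $\A=\la b\ra$ and the maximal linked system $\mathcal D=\la\{a,b\},\{a,c\},\{b,c\}\ra=\{S:|S\cap\{a,b,c\}|\ge 2\}$ (maximality is clear because every $T$ with $|T\cap\{a,b,c\}|\le 1$ misses one of the three pairs); a one-line computation gives $\A\wedge\mathcal D=\la b\ra$ and hence $(\A\wedge\mathcal D)\vee\mathcal D=\la b\ra\vee\mathcal D=\la b\ra\ne\mathcal D$. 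For $\varphi(X)$ I take the principal filters $\F=\la b\ra$ and $\mathcal G=\la\{a,c\}\ra$ and compute $\F\wedge\mathcal G=\la\{a,b\}\ra$, whence $(\F\wedge\mathcal G)\vee\mathcal G=\la\{a,b\}\ra\vee\la\{a,c\}\ra=\la\{a,b,c\}\ra\ne\mathcal G$. Either way absorption fails, so $|X|\ge 3$ is impossible and $X$ is a linear lattice of order $|X|\le 2$.

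I expect the main obstacle to be conceptual rather than computational: a lattice structure requires the meet-order and the join-order to be mutually dual, and the real content of the hard direction is that the extension to $\lambda(X)$ or $\varphi(X)$ desynchronizes these two orders the moment $X$ contains three comparable elements --- the explicit systems above are merely witnesses of this desynchronization. Two points demand care. First, one must verify that the chosen systems ($\mathcal D$ and the two principal filters) genuinely lie in $\lambda(X)$ and $\varphi(X)$, so that the products computed via $\otimes$ remain inside these subspaces. Second, in $(1)\Ra(2)$ one must work with proper (nonempty) upfamilies: the empty family would be a two-sided zero for both $\wedge$ and $\vee$, which would immediately destroy absorption, so it must not be counted among the elements of $\upspace(X)$.
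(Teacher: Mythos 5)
Your proposal is correct and follows essentially the same route as the paper: both directions hinge on applying Theorem~\ref{t3.1} to each of the two operations to force $X$ to be a finite chain, followed by an explicit violation of the lattice axioms when $|X|\ge 3$ --- your witness for $\lambda(X)$ (the system $\mathcal D$ together with the principal ultrafilter at the middle element) is exactly the paper's pair $\Delta,\la 1\ra$, and your filter pair $\la\{b\}\ra,\la\{a,c\}\ra$ is an equally valid variant of the paper's $\la\{0,1,2\}\ra,\la\{0,2\}\ra$. The only cosmetic differences are that you phrase the failure directly as a breakdown of absorption rather than as the paper's observation that $\max$ and $\min$ of the two elements coincide, and you work inside $\lambda(X)$, $\varphi(X)$ directly rather than via the subsemilattice $\lambda(3)$, $\varphi(3)$.
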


\begin{proof} $(1)\Ra(2)$ If $X$ is a linear lattice of order $|X|=1$, then $\upspace(X)=X$ is a trivial lattice. If $X$ is a linear lattice of order 2, then $X$ can be identified with the lattice $2=\{0,1\}$ endowed with the operations $x\wedge y=\min\{x,y\}$ and $x\vee y=\max\{x,y\}$. In this case $\lambda(2)=\beta(2)$ coincides with the lattice $2$, $\varphi(2)=\{\la\{0\}\ra,\la\{0,1\}\ra,\la\{1\}\ra\}$ is a 3-element lattice, isomorphic to the lattice $3=\{0,1,2\}$ endowed with the operations $\min$ and $\max$, and $\upspace(2)=\big\{\la\{0\}\ra,\la \{0,1\}\ra,\la\{0\},\{1\}\ra,\la\{1\}\ra\big\}$ is a 4-element lattice isomorphic to the lattice $\{0,1\}^2$.
\smallskip

The implications $(2)\Ra(3,4)$ are trivial.
\smallskip

$(3,4)\Ra(1)$ Assume that $\lambda(X)$ or $\varphi(X)$ is a lattice. By Theorem~\ref{t3.1}, the lattice $X$ is finite and linear.  We claim that $|X|\le 2$. Assuming the converse, we conclude that the lattice $X$ contains a sublattice isomorphic to the lattice $(3,\min,\max)$.

Consider the maximal linked upfamily $\Delta=\{A\subset 3:|A|\ge 2\}$ and observe that $$\max\{\Delta,\la 1\ra\}=\la 1\ra=\min\{\Delta,\la 1\ra\},$$ which implies that $\lambda(3)$ is not a lattice and then $\lambda(X)$ also is not a lattice.

Next, consider the filters $\A=\la\{0,1,2\}\ra$ and $\mathcal B=\la\{0,2\}\ra$ and observe that $$\max\{\A,\mathcal B\}=\A=\min\{\A,\mathcal B\}$$implying that $\varphi(3)$ is not a lattice and then $\varphi(X)$ also cannot be a lattice.
\end{proof}

\end{document}